\newcommand{\arxiv}[1]{\href{http://arxiv.org/abs/#1}{\texttt{arXiv:#1}}}
\theoremstyle{plain}
\newtheorem{cor}{Corollary}
\newtheorem{lemma}{Lemma}
\newtheorem{prop}{Proposition}
\newcommand{\red}{\mathrm{red}}
\title{Ascent sequences avoiding pairs of patterns}
\author{Andrew M. Baxter \\
\small Mathematics Department \\[-0.8ex]
\small Pennsylvania State University \\[-0.8ex]
\small State College, PA, USA \\[-0.8ex]
\small \texttt{baxter@math.psu.edu}\\
\and
Lara K. Pudwell\\
\small Department of Mathematics and Statistics \\[-0.8ex]
\small Valparaiso University \\[-0.8ex]
\small Valparaiso, IN, USA \\[-0.8ex]
\small \texttt{Lara.Pudwell@valpo.edu}
}
\begin{document}

\maketitle

\abstract{
Ascent sequences were introduced by Bousquet-Melou et al. in connection with (2+2)-avoiding posets and their pattern avoidance properties were first considered by Duncan and Steingr\'{i}msson.  In this paper, we consider ascent sequences of length $n$ avoiding two patterns of length 3, and we determine an exact enumeration for 16 different pairs of patterns.    Methods include simple recurrences, bijections to other combinatorial objects (including Dyck paths and pattern-avoiding permutations), and generating trees.   We also provide an analogue of the Erd\H{o}s-Szekeres Theorem to prove that any sufficiently long ascent sequence contains either many copies of the same number or a long  increasing subsequence, with a precise bound.
}

\section{Introduction}\label{Introduction}

Given an integer string $x_1\dotsm x_n$, an \emph{ascent} is position $j$ such that $x_j<x_{j+1}$.  Write $\mathrm{asc}(x_1 \dotsm x_n)$ for the number of ascents in $x_1\dotsm x_n$.  An \emph{ascent sequence} $x_1\dotsm x_n$ is a sequence of nonnegative integers such that
\begin{enumerate}
\item $x_1=0$, and
\item for $1<i \leq n$, $x_i \leq \mathrm{asc}(x_1\dotsm x_{i-1})+1$.
\end{enumerate}
For example, 01234, 0120102, and 01013 are all ascent sequences, while 01024 is not since $\mathrm{asc}(0102)=2$.  Ascent sequences have been an increasingly frequent topic of study since Bousquet-M\'{e}lou, Claesson, Dukes, and Kitaev related them to $(2+2)$-free posets and enumerated the total number of ascent sequences \cite{BDCK10}.  Since then, various authors have connected ascent sequences to a number of other combinatorial objects \cite{CL11,DRKS11,DP10,KR11}; also see \cite[Section 3.2.2]{K11} for additional references.  The number of ascent sequences of length $n$ is given by the Fishburn numbers, Online Encyclopedia of Integer Sequences (OEIS) sequence A022493.

Given a string of integers $x=x_1\dotsm x_n$, the \emph{reduction} of $x$, denoted $\red(x)$ is the string obtained by replacing the $i$th smallest digits of $x$ with $i-1$.  For example, $\mathrm{red}(273772) = 021220$.  A \emph{pattern} is merely a reduced string.  We say that $x=x_1 \dotsm x_n$ \emph{contains} $p=p_1\dotsm p_k$ if there exists a subsequence of $x$ order-isomorphic to $p$, i.e., there exist indices $1 \leq i_1 < i_2 < \dotsm < i_k \leq n$ such that $\red(x_{i_1}x_{i_2}\dotsm x_{i_k}) = p$.  This is analogous to the classical definition of patterns for permutations, but here patterns may contain repeated digits, and patterns are normalized so that their smallest digit is 0 rather than 1. We write $\mathcal{A}(n)$ for the set of ascent sequences of length $n$ and $\mathcal{A}_B(n)$ for the set of ascent sequences of length $n$ avoiding all patterns in list $B$.  Also, we let $\mathrm{a}_B(n) = \left|\mathcal{A}_B(n)\right|$.

Pattern avoidance in ascent sequence was first studied by Duncan and Steingr\'{i}msson \cite{DS11}.  Their main results are given in Table \ref{DStable}.  Duncan and Steingr\'{i}msson also conjectured relationships between sequences avoiding 201, 210, 0123, 0021, or 1012 and other entries in the Online Encyclopedia of Integer Sequences \cite{OEIS}.  They provide the number of sequences avoiding 000, 100, 110, 120, or 201 of length at most 14, but without closed formula or conjecturally related objects.

\begin{table}
\begin{center}
\begin{tabular}{|c|c|c|c|}
\hline
Pattern $\sigma$&$\{\mathrm{a}_\sigma(n)\}_{n \geq 1}$&OEIS&Formula\\
\hline
001&\multirow{4}{*}{$1,2,4,8,16,32,\dots$}&\multirow{4}{*}{A000079}&\multirow{4}{*}{$2^{n-1}$}\\
010&&&\\
011&&&\\
012&&&\\
\hline
102&\multirow{3}{*}{$1,2,5,14,41,122,\dots$}&\multirow{3}{*}{A007051}&\multirow{3}{*}{$(3^{n-1}+1)/2$}\\
0102&&&\\
0112&&&\\
\hline
101&\multirow{3}{*}{$1,2,5,14,42,132,\dots$}&\multirow{3}{*}{A000108}&\multirow{3}{*}{$\frac{1}{n+1}\binom{2n}{n}$}\\
021&&&\\
0101&&&\\
\hline
\end{tabular}
\end{center}
\caption{Initial results for pattern-avoiding ascent sequences}
\label{DStable}
\end{table}

Mansour and Shattuck \cite{MS14} later computed the number of sequences avoiding 1012 or 0123 and showed that certain statistics on 0012-avoiding ascent sequences are equidistributed with other statistics on the set of 132-avoiding permutations.  They also considered 210-avoiding ascent sequences, and while they did not enumerate such sequences, they determined additional combinatorial structure.  Chen, Dai, Dokos, Dwyer, and Sagan \cite{CDDDS13} proved Duncan's and Steingr\'{i}msson's conjecture that the ascent statistic and the right-to-left minima statistic are each equidistibuted over 021-avoiding ascent sequences and 132-avoiding permutations.  A recent result of Callan \cite{C14} provides a bijection between 021-avoiding ascent sequences and Dyck paths which preserves several interesting statistics.  Callan, Mansour, and Shattuck also identified the complete equivalence class of pairs of length-4 patterns such that $\mathrm{a}_{\sigma,\tau}(n)$ is given by the Catalan numbers in \cite{CMS14}.

We will commonly make use of the following lemma of Duncan and Steingr\'{i}msson \cite{DS11}.  A word $x_1 \cdots x_n$ of nonnegative integers is called a \emph{restricted growth function}, or \emph{RGF}, if the first appearance of $k$ must be proceeded by a $k-1$ for each $k\geq 1$ (and it is quickly seen $k$ must be preceded by each of $0,1,2,\dotsc, k-1$).

\begin{lemma}[{\cite[Lemma 2.4]{DS11}}]\label{RGFlemma}
 Let $p$ be a pattern.  Then $\mathcal{A}_{p}(n)$ consists solely of RGFs if and only if $p$ is a subpattern of $01012$.
\end{lemma}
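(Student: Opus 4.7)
The plan is to establish both implications separately; the substantial work lies in the forward ($\Rightarrow$) direction, while the converse follows from a small witness. For the converse I would exhibit the length-$5$ non-RGF ascent sequence $01013$: it satisfies $\asc(0101) = 2$ so it belongs to $\mathcal{A}(5)$, but the final $3$ exceeds the running maximum $1$ by more than one, violating the RGF condition. Since $01013$ reduces to $01012$, its set of subpatterns coincides with that of $01012$. Thus whenever $p$ is \emph{not} a subpattern of $01012$, the sequence $01013$ is a non-RGF element of $\mathcal{A}_p(5)$, witnessing that $\mathcal{A}_p(n)$ is not confined to RGFs.

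For the forward direction it suffices to show that every non-RGF ascent sequence $x$ contains $01012$, since then it contains every subpattern of $01012$ as well. Let $i$ be the smallest index with $x_i > \max(x_1,\ldots,x_{i-1}) + 1$, and put $m := \max(x_1,\ldots,x_{i-1})$. Minimality of $i$ makes the prefix $y := x_1 \cdots x_{i-1}$ an RGF, and the ascent-sequence inequality forces $\asc(y) \geq x_i - 1 \geq m+1$. Since $x_i$ strictly exceeds every entry of $y$, any $0101$-pattern in $y$ together with $x_i$ yields a $01012$-pattern in $x$. The forward direction therefore reduces to the key claim: \emph{every RGF $y$ with $\asc(y) > \max(y)$ contains $0101$ as a pattern.}

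I would prove the key claim by contrapositive, showing that any RGF $y$ with maximum $m$ that avoids $0101$ satisfies $\asc(y) \leq m$. Grouping ascents by their top value $b \in \{1,\ldots,m\}$, the strategy is to show each group contains at most one ascent. Suppose instead two ascents occur at positions $j_1 < j_2$ with $y_{j_1+1} = y_{j_2+1} = b$ and $a_2 := y_{j_2} < b$. Note $j_2 \geq j_1+2$, since otherwise $y_{j_2} = y_{j_1+1} = b \neq a_2$. If $a_2$ were to appear at any position $p \leq j_1$, then the indices $p < j_1+1 < j_2 < j_2+1$ would carry the values $a_2, b, a_2, b$, producing a forbidden $0101$-pattern. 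Hence $a_2$ first appears at some position $\geq j_1+1$.

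The main obstacle is extracting a contradiction from this restriction. The decisive observation is that in an RGF the values $0, 1, \ldots, m$ first appear in that increasing order (precisely the stronger form of RGF noted in the paragraph preceding the lemma), so since $a_2 < b$ the first occurrence of $a_2$ must strictly precede the first occurrence of $b$. But the first $b$ sits at position $\leq j_1+1$ (since $y_{j_1+1} = b$), placing the first $a_2$ at some position $\leq j_1$, in direct conflict with the restriction just derived. Hence each ascent class has size at most one, $\asc(y) \leq m$, the key claim follows, and with it the forward direction.
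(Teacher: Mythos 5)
Your argument is correct and complete. Note, however, that the paper does not prove this lemma at all---it is imported directly from Duncan and Steingr\'{i}msson \cite{DS11}---so there is no internal proof to compare against; what you have written is a self-contained replacement for the citation. The structure is sound: the witness $01013$ (a valid ascent sequence, a non-RGF, and reducing to $01012$ so that it contains exactly the subpatterns of $01012$) disposes of one implication, and the other implication correctly reduces, via the prefix $y$ before the first RGF violation and the bound $\asc(y)\geq x_i-1>\max(y)$, to your key claim that an RGF with more ascents than its maximum letter contains $0101$. Your proof of that claim is also right: grouping ascents by their top value $b$, the existence of two ascents ending at the same value $b$ at positions $j_1<j_2$ forces the bottom letter $a_2$ of the later ascent to first occur both at or before position $j_1$ (since in an RGF first occurrences of $0,1,2,\dotsc$ come in increasing order and $b$ already appears at position $j_1+1$) and strictly after position $j_1$ (since an earlier occurrence of $a_2$ would complete a $0101$), a contradiction. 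Two cosmetic remarks only: your labels ``forward'' and ``converse'' are swapped relative to the biconditional as stated, and the lemma must be read with ``for all $n$'' attached to the left-hand side of the equivalence (for $n\leq 4$ every ascent sequence is an RGF regardless of $p$), which is precisely the reading under which your single length-$5$ witness suffices.
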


In this paper, we extend the work on pattern avoiding-ascent sequences by considering ascent sequences that avoid a pair of patterns of length 3 simultaneously.   There are 13 patterns of length 3 (000, 001, 010, 100, 011, 101, 110, and the six permutations), and thus there are 78 pairs of patterns.  Computation shows that these pairs produce at least 35 different avoidance sequences, 19 of which are new to OEIS.  Table \ref{BPtable} highlights a representative pattern set for each of the 16 sequences already found (as of August 2013) in OEIS for other reasons. We will use the following notation for common sequences:  $F_n$ is the $n$th Fibonacci number (with the convention $F_0=0$, $F_1=1$), $M_n$ is the $n$th Motzkin number, and $C_n$ is the $n$th Catalan number.  In the Table \ref{BPtable} we connect the appropriate ascent sequences to the formula or combinatorial object referenced in OEIS.

\begin{table}
\begin{center}
\begin{tabular}{|c|c|c|c|c|}
\hline
Patterns $B$&$\{\mathrm{a}_B(n)\}_{n \geq 1}$&OEIS&Formula&Result\\
\hline
010,021&$1,2,4,8,16,32,64,\dotsc$&A000079&$2^{n-1}$&Prop \ref{twos}\\
\hline
101,201&$1,2,5,14,42,132,429,\dotsc$&A000108&$C_n$&Prop \ref{catalan}\\
\hline
101,210&$1,2,5,14,41,122,365,\dotsc$&A007051&$(3^{n-1}+1)/2$&Prop \ref{threes}\\
\hline
000,012&$1,2,3,3,0,0,0,\dotsc$&trivial&&Prop \ref{allzeros}\\
\hline
000,011&$1,2,3,4,5,6,7,\dotsc$&A000027&$n$&Prop \ref{naturalnums}\\
\hline
000,001&$1,2,3,5,8,13,21,\dots$&A000045&$F_{n+1}$& Prop \ref{fib}\\
\hline
011,100&$1,2,4,7,11,16,22,\dots$&A000124&$n(n-1)/2+1$&Prop \ref{caterer}\\
\hline
001,100&$1,2,4,7,12,20,33,\dots$&A000071&$F_{n+2}-1$&Prop \ref{fib1}\\
\hline
001,210&$1,2,4,8,15,26,42,\dots$&A000125&$\binom{n}{3}+n$&Prop \ref{cake}\\
\hline
000,101&$1,2,4,9,21,51,127,\dots$&A001006&$M_n$&Prop \ref{motzkin}\\
\hline
100,101&$1,2,5,13,35,97,275,\dots$&A025242&&Prop \ref{GenCatalan}\\
\hline
021,102&$1,2,5,13,32,74,163,\dots$&A116702&$3\cdot2^{n-1}-\binom{n+1}{2}-1$&Prop \ref{perm1}\\
\hline
102,120&$1,2,5,13,33,81,193,\dots$&A005183&$(n-1)2^{n-2}+1$&Prop \ref{perm2}\\
\hline
101,120&$1,2,5,13,33,82,202,\dots$&A116703&&Prop \ref{perm3}\\
\hline
101,110&$1,2,5,13,89,233,610,\dots$&A001519&$F_{2n-1}$&Prop \ref{bifib}\\
\hline
201,210&$1,2,5,15,51,188,731,\dots$&A007317&$\sum\limits_{k=0}^{n-1}\binom{n-1}{k}C_k$&Prop \ref{binomcat}\\
\hline
\end{tabular}
\end{center}
\caption{Ascent sequences avoiding a pair of patterns of length 3}
\label{BPtable}
\end{table}

\section{Proofs by elementary methods}\label{patternset}

First we make an observation: in the class of pattern-avoiding permutations, avoiding one pattern of length 3 always produces a different sequence than avoiding two patterns of length 3.  (In particular, the number of length 3 permutations avoiding a single pattern is 5, while the number of length 3 permutations avoiding a pair of patterns of length 3 is 4.)  With ascent sequences this is no longer the case, as we see in the following two propositions.

\begin{prop}\label{twos}
$\mathrm{a}_{010,021}(n)=2^{n-1}$
\end{prop}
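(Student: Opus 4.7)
The plan is to show that the second constraint is redundant: every $010$-avoiding ascent sequence automatically avoids $021$, so $\mathcal{A}_{010,021}(n) = \mathcal{A}_{010}(n)$, which has cardinality $2^{n-1}$ by Table \ref{DStable}. The key intermediate claim is that every $010$-avoiding ascent sequence is weakly increasing; since a $021$-pattern requires a strict descent between its second and third letters, no non-decreasing sequence can contain $021$.

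To prove the claim, I would first invoke Lemma \ref{RGFlemma}: because $010$ is a subpattern of $01012$, every sequence in $\mathcal{A}_{010}(n)$ is a restricted growth function. Now suppose toward contradiction that such a sequence $x_1 \dotsm x_n$ has a descent $x_i > x_{i+1}$. In an RGF a value that is not strictly larger than every previous value cannot be a first occurrence, so $x_{i+1}$ appeared earlier in the sequence; let $k$ be any earlier position with $x_k = x_{i+1}$. Certainly $k \neq i$ (else $x_i = x_{i+1}$), so $k < i$, and the triple $x_k, x_i, x_{i+1}$ at positions $k<i<i+1$ reads $a, b, a$ with $b > a$. This is a $010$-pattern, contradicting $010$-avoidance. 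Hence the sequence is non-decreasing, establishing the claim.

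Having reduced to counting $\mathcal{A}_{010}(n)$, one may simply cite Table \ref{DStable}. Alternatively, one can give a direct count: a non-decreasing RGF of length $n$ has the shape $0^{a_0} 1^{a_1} \cdots m^{a_m}$ with each $a_i \geq 1$, and such sequences are in bijection with compositions of $n$ into positive parts, of which there are $2^{n-1}$. Every such sequence is easily verified to be a valid ascent sequence.

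There is no real obstacle in this argument; the only substantive observation is that $010$-avoidance, together with the ascent/RGF condition, rigidly forces monotonicity. Once that is noted, avoidance of $021$ comes for free and the enumeration is elementary.
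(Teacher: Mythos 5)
Your proof is correct and follows essentially the same route as the paper: both arguments show that the $021$ restriction is superfluous because $010$-avoiding ascent sequences are forced to be weakly increasing, and then invoke the known count $2^{n-1}$. The only cosmetic difference is that you derive the weak-increase claim directly from Lemma \ref{RGFlemma} (and offer the compositions bijection), whereas the paper cites Duncan and Steingr\'{i}msson's observation that $\mathcal{A}_{010}(n)=\mathcal{A}_{10}(n)$ and notes that $10$ is contained in $021$.
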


\begin{proof}
Duncan and Steingr\'{i}msson prove that $\mathcal{A}_{010}(n)=\mathcal{A}_{10}(n)$, since both restrictions force the ascent sequences to be weakly increasing.  Clearly $\mathcal{A}_{p,q}(n)=\mathcal{A}_{p}(n)$ for any $n$ if $p$ is contained in $q$, and since $10$ is contained in $021$ the result follows from  Duncan and Steingr\'{i}msson's result that $\left|\mathcal{A}_{010}(n)\right|=2^{n-1}$.
\end{proof}

As remarked in the proof, we have the following generalization:

\begin{cor}
If $p$ is a pattern containing 10, then  $\mathcal{A}_{010,p}(n)~=~\mathcal{A}_{010}(n)$.
\end{cor}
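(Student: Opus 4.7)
The plan is to leverage the key fact already invoked in the proof of Proposition \ref{twos}: namely, Duncan and Steingr\'{i}msson's observation that $\mathcal{A}_{010}(n) = \mathcal{A}_{10}(n)$, so every 010-avoiding ascent sequence is weakly increasing. From there the corollary should reduce to a short containment argument.

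First I would recall that a sequence contains the pattern $p$ only if it contains every subpattern of $p$. In particular, if $p$ contains $10$, then any sequence containing $p$ must also contain $10$, i.e., must have a strict descent. Equivalently, by contrapositive, any sequence avoiding $10$ (that is, any weakly increasing sequence) automatically avoids $p$.

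Next I would apply this to ascent sequences. Take any $x \in \mathcal{A}_{010}(n)$; by the Duncan--Steingr\'{i}msson equality $\mathcal{A}_{010}(n) = \mathcal{A}_{10}(n)$, the sequence $x$ is weakly increasing and so avoids $10$. By the previous paragraph, $x$ therefore avoids $p$ as well, giving $\mathcal{A}_{010}(n) \subseteq \mathcal{A}_{p}(n)$. Intersecting with $\mathcal{A}_{010}(n)$ yields
\[
\mathcal{A}_{010,p}(n) = \mathcal{A}_{010}(n) \cap \mathcal{A}_{p}(n) = \mathcal{A}_{010}(n),
\]
which is exactly the claim.

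There is no real obstacle: the entire content is packaged inside the identity $\mathcal{A}_{010}(n) = \mathcal{A}_{10}(n)$, which is quoted from \cite{DS11}. The only thing to be careful about is the formal statement that ``containing $p$ implies containing every subpattern of $p$,'' which follows immediately by restricting the order-isomorphic subsequence witnessing $p$ to the positions of the chosen subpattern.
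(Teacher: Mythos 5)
Your proof is correct and follows exactly the paper's reasoning: the corollary is presented as an immediate generalization of the argument in Proposition \ref{twos}, namely that $\mathcal{A}_{010}(n)=\mathcal{A}_{10}(n)$ forces every such sequence to be weakly increasing, hence to avoid any pattern $p$ containing $10$. Nothing is missing.
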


Pattern-avoiding ascent sequences exhibit a broader phenomenon of \emph{superfluous restrictions}, that is, patterns $p$ and $q$ such that $\mathcal{A}_{p,q}(n) = \mathcal{A}_{p}(n)$ even when $p$ is not contained in $q$.  We now consider a few more examples of such superfluous restrictions.  

\begin{prop}\label{catalan}
$\mathrm{a}_{101,201}(n)=C_n$
\end{prop}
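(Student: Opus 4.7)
The plan is to show $\mathcal{A}_{101,201}(n) = \mathcal{A}_{101}(n)$, making the $201$ restriction superfluous in the presence of the $101$ restriction; combined with Duncan and Steingr\'{i}msson's result $\mathrm{a}_{101}(n)=C_n$ recorded in Table~\ref{DStable}, this immediately yields the Catalan enumeration. Since $\mathcal{A}_{101,201}(n) \subseteq \mathcal{A}_{101}(n)$ trivially, the task reduces to proving that every $101$-avoiding ascent sequence also avoids $201$.

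The key structural lemma I would establish first is that in any $101$-avoiding ascent sequence $x_1\cdots x_n$, the values appearing in each prefix $x_1\cdots x_k$ form the initial segment $\{0,1,\ldots,M_k\}$, where $M_k = \max_{j\leq k} x_j$. The main step is a pigeonhole observation on ascent tops: every ascent position $p \leq k-1$ contributes a top value $x_{p+1}\in\{1,\ldots,M_k\}$, so if $\asc(x_1\cdots x_k) > M_k$, then two ascent positions $p_1<p_2$ must share a common top value $v$. The inequality $x_{p_2}<v = x_{p_1+1}$ rules out $p_2 = p_1+1$, so $p_1+1 < p_2 < p_2+1$ is a genuine triple of positions whose values $(v, x_{p_2}, v)$ reduce to $101$, contradicting $101$-avoidance. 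Hence $\asc(x_1\cdots x_k)\leq M_k$ at every prefix, and combined with the ascent-sequence constraint $x_{k+1}\leq \asc(x_1\cdots x_k)+1\leq M_k+1$ a short induction on $k$ shows that $M$ grows by at most one per step and that every new value of $M$ is actually attained, which delivers the initial-segment property.

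Given this lemma the proposition follows in one move. Suppose for contradiction that a $101$-avoiding ascent sequence $x$ contains a $201$ pattern at positions $i<j<k$, so $x_j < x_k < x_i$. Setting $v = x_k$, the inequality $v < x_i$ together with the initial-segment property applied to the prefix $x_1\cdots x_i$ forces $v$ to occur at some position $m \leq i$; since $x_i > v$ we must have $m < i$. Then $m < j < k$ with $x_m = v = x_k$ and $x_j < v$ yields a $101$ pattern at $(m,j,k)$, the desired contradiction. This proves $\mathcal{A}_{101}(n) \subseteq \mathcal{A}_{101,201}(n)$, hence equality, and $\mathrm{a}_{101,201}(n) = C_n$ follows.

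The main technical obstacle is the pigeonhole lemma on ascent tops, and in particular the care needed to verify that the two ascents sharing a top value are separated enough (i.e.\ $p_2 > p_1+1$) to produce a genuine $101$ rather than a degenerate two-term coincidence. Once that observation is made, extracting the initial-segment property is a routine induction and the superfluous-restriction step is a three-line contradiction.
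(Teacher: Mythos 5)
Your proposal is correct and follows essentially the same route as the paper: you show the $201$ restriction is superfluous for $101$-avoiders by locating an earlier occurrence of the value playing the role of the ``1'' and forming a $101$ from it, then invoke $\mathrm{a}_{101}(n)=C_n$. The only difference is that where the paper cites Duncan and Steingr\'{i}msson's RGF lemma (Lemma~\ref{RGFlemma}) for the needed initial-segment property, you reprove that property from scratch via a correct pigeonhole argument on ascent tops, which makes the argument self-contained but is not a new idea.
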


\begin{proof}
We claim that if $x$ avoids 101 then $x$ also avoids 201.  By Lemma 2.4 of \cite{DS11}, the 101-avoiding ascent sequences are restricted growth functions (or RGFs), meaning that that the first appearance of $k$ must be proceeded by each of $0, 1, 2, \dotsc, k-2$, and $k-1$.  Suppose $x$ contains a 201 pattern with letters $cab$ for $a<b<c$.  The RGF restriction implies the $c$ must be preceded by a $b$, and so we see $x~=~w_1~b~w_2~c~w_3~a~w_4~b~w_5$ for intervening words $w_i$ and so contains 101 by the $bab$.  Thus we see that the 201 is a superfluous restriction when paired with 101, and so we may apply Duncan and Steingr\'{i}msson's result that $\left|\mathcal{A}_{101}(n)\right|=C_n$. 
\end{proof}

Similar reasoning yields following generalization:

\begin{cor}
We have $\mathcal{A}_{101, q}(n) = \mathcal{A}_{101}(n)$ for any pattern $q$ containing $201$.
\end{cor}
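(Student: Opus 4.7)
The plan is to derive this as a direct consequence of the argument already carried out in the proof of Proposition \ref{catalan}. The key point is that that proof did strictly more than enumerate $\mathcal{A}_{101}(n)$: it showed the set-theoretic containment $\mathcal{A}_{101}(n) \subseteq \mathcal{A}_{201}(n)$, i.e.\ every 101-avoiding ascent sequence automatically avoids 201. I would begin by recording this containment explicitly, citing the RGF argument from Proposition \ref{catalan} (if $x$ contained a 201 with letters $cab$, then the first $b$ guaranteed by the RGF property would form a 101 with the $a$ and an earlier $b$).

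Next I would observe the trivial monotonicity of pattern containment: if $201$ is a subpattern of $q$, then any occurrence of $q$ in a sequence $x$ gives rise to an occurrence of $201$, so $\mathcal{A}_{201}(n) \subseteq \mathcal{A}_q(n)$. Chaining the two inclusions yields $\mathcal{A}_{101}(n) \subseteq \mathcal{A}_q(n)$, so adjoining $q$ as an additional forbidden pattern removes nothing: $\mathcal{A}_{101, q}(n) = \mathcal{A}_{101}(n) \cap \mathcal{A}_q(n) = \mathcal{A}_{101}(n)$. The reverse inclusion $\mathcal{A}_{101, q}(n) \subseteq \mathcal{A}_{101}(n)$ is immediate from the definitions.

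There is no real obstacle here; the entire content has been absorbed into Proposition \ref{catalan}, and the corollary is just the observation that once $\mathcal{A}_{101}(n)$ lies inside $\mathcal{A}_{201}(n)$, it lies inside $\mathcal{A}_q(n)$ for every $q$ that has $201$ as a subpattern. The only thing worth flagging is that the argument really does use the RGF structure supplied by Lemma \ref{RGFlemma}, and hence is specific to $101$; an analogous statement for an arbitrary length-3 pattern in place of $101$ would not follow for free.
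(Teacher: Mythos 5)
Your proposal is correct and matches the paper's intended argument: the proof of Proposition \ref{catalan} already establishes $\mathcal{A}_{101}(n)\subseteq\mathcal{A}_{201}(n)$ via the RGF property, and combining this with the monotonicity $\mathcal{A}_{201}(n)\subseteq\mathcal{A}_{q}(n)$ for any $q$ containing $201$ gives the corollary exactly as the paper's ``similar reasoning'' remark intends. Nothing further is needed.
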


We next move to another unusual phenomenon where $|\mathcal{A}_{p,q}(n)| = |\mathcal{A}_{r}(n)|$ for three patterns $p$, $q$, and $r$ all of the same length.  Theorem 2.8 of \cite{DS11} states that $\mathrm{a}_{102}=(3^{n-1}+1)/2$.  The same enumeration sequence appears for ascent sequences avoiding a pair of patterns in a less obvious way than superfluous restrictions, as the following theorem shows.

\begin{prop}\label{threes}
$\mathrm{a}_{101,210}(n) = \frac{3^{n-1}+1}{2}$
\end{prop}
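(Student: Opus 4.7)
The plan is to prove the formula by induction on $n$, establishing the recurrence $\mathrm{a}_{101,210}(n) = 3\mathrm{a}_{101,210}(n-1) - 1$ for $n \geq 2$; combined with $\mathrm{a}_{101,210}(1) = 1$, this solves to $(3^{n-1}+1)/2$.

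First I would invoke Lemma~\ref{RGFlemma} to conclude that every 101-avoiding ascent sequence is an RGF, so each $x \in \mathcal{A}_{101,210}(n-1)$ uses the values $\{0, 1, \ldots, m\}$ with first appearances in order. Writing $c = x_{n-1}$, I would determine which letters $v \in \{0, 1, \ldots, m+1\}$ give $x \cdot v \in \mathcal{A}_{101,210}(n)$. The cleanest case is $c < m$: exactly two extensions work, namely $v = c$ and $v = m+1$. Appending $v < c$ produces a 210-pattern at positions $(p, n{-}1, n)$ for any $p < n{-}1$ with $x_p = m$ (such $p$ exists since $c \neq m$); appending $c < v \leq m$ produces a 101-pattern at $(f(v), n{-}1, n)$, since $v$ already occurs in the RGF prefix before position $n-1$. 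The extensions $v = c$ and $v = m+1$ are easily checked to be safe using that $x$ is already 101- and 210-avoiding.

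For the complementary case $c = m$ (the sequence ends at its max), I would split on whether $x$ is weakly increasing. If so, every $v \in \{0, \ldots, m+1\}$ is valid, giving $m+2$ extensions. Otherwise, the existing descent(s) in $x$ forbid certain $v < m$, and a per-sequence structural check is needed. Summing the extension-count $e(x)$ over all $x$ and matching against $3\mathrm{a}_{101,210}(n-1) - 1$ reduces, after the Case-B accounting, to the identity
\[
\sum_{x \in \mathcal{A}_{101,210}(n-1),\; x_{n-1} = \max(x)} (e(x) - 2) = \mathrm{a}_{101,210}(n-1) - 1.
\]

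The main obstacle is establishing this last identity. I would attempt to exhibit a bijection between the ``extra'' extensions (pairs $(x, v)$ with $x_{n-1} = \max(x) = m$ and $v < m$ valid) and the set $\mathcal{A}_{101,210}(n-1) \setminus \{0^{n-1}\}$, matching each nontrivial length-$(n-1)$ sequence with exactly one such pair. A first candidate sends $(x, v)$ to $x_1 \cdots x_{n-2}\, v$, but this fails to be injective, since two different Case-A sequences sharing the prefix $x_1 \cdots x_{n-2}$ but with different values of $m$ can produce the same image. Refining the map to distinguish whether $m$ first appears at position $n{-}1$ versus earlier, and tracking the descent structure of $x$, is where the technical work lies; once this bijection is secured, the recurrence and hence the closed form follow immediately.
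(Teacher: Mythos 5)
Your framework is sound up to a point: the recurrence $\mathrm{a}_{101,210}(n)=3\,\mathrm{a}_{101,210}(n-1)-1$ does solve to $(3^{n-1}+1)/2$, and your analysis of the case $x_{n-1}<\max(x)$ is correct and complete (exactly the two extensions $v=x_{n-1}$ and $v=\max(x)+1$ survive, by the 210- and 101-arguments you give). The reduction to the identity $\sum_{x:\,x_{n-1}=\max(x)}(e(x)-2)=\mathrm{a}_{101,210}(n-1)-1$ is also the right bookkeeping. But that identity is the actual content of the proposition, and you do not prove it: you name a candidate bijection, observe that it fails to be injective, and defer the repair to unspecified ``technical work.'' In particular you never characterize, for a sequence $x$ ending at its maximum $m$ and containing a descent, exactly which values $v<m$ may be appended --- which requires precisely the structural facts about where descents can occur in a $\{101,210\}$-avoiding RGF (each descent bottom must match the top of the most recent ``marked'' ascent, and these markers interleave with the descents). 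Without that characterization the sum $\sum(e(x)-2)$ cannot be evaluated, so the proof has a genuine gap at its central step.

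For comparison, the paper avoids the per-sequence extension count entirely. It first derives those same structural facts (its observations (i)--(iii)) and uses them to encode each $x\in\mathcal{A}_{101,210}(n)$ as a word in $\{A,B,C,D\}^{n-1}$ recording the behavior of each adjacent pair (repeat, descent, ``marked'' ascent whose value will later be descended to, unmarked ascent); the structure forces the letters $C$ and $B$ to alternate starting with $C$, and such words correspond to ternary strings of length $n-1$ with an even number of $2$s, already counted by Duncan and Steingr\'{i}msson. If you want to salvage your route, the same observations (i)--(iii) are what you need to pin down $e(x)$ for sequences ending at their maximum; alternatively, you could abandon the recurrence and prove the encoding is bijective, as the paper does by constructing the explicit inverse.
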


\begin{proof}
Lemma 2.7 of \cite{DS11} proves that there are $\frac{3^{n-1}+1}{2}$ ternary strings of length $n-1$ with an even number of 2s, and Thereom 2.8 puts $\mathcal{A}_{102}(n)$ in bijection with these strings.  We now put members of $\mathcal{A}_{101,210}(n)$ in bijection with the same ternary strings.

As mentioned in the proof of Proposition \ref{catalan}, an ascent sequences avoiding 101 must be an RGF.  Therefore, we are considering a subset of the RGFs with additional restrictions:
 \begin{enumerate}[(i)]
  \item   If $x$ avoids 101, and $x_i<x_{i+1}$ then $x_{j}< x_{i+1}$  for any $j<i$ (since otherwise a 101 pattern appears).  In other words, the second letter of an ascent must be the first time a letter with that value appears in $x$.  

 \item If $x$ is an RGF which avoids 101, then whenever $x_i > x_{i+1}$ there must be some $j<i$ such that $x_j = x_{i+1}$.  Let $j^*$ be the largest index less than $i$ such that $x_{j^*} = x_{i+1}$.  Then $x_{j^*} < x_{j^*+1}$, since otherwise $x_{j^* } > x_{j^*+1}$ implies $\red(x_{j^*} x_{j^*+1} x_{i+1}) = 101$ and maximality of $j^*$ implies $x_{j^*}\neq x_{j^*+1}$.  If $x$ also avoids 210, then we can further state $x_{j^*+1} <x_i$ or else a 210 is formed by $x_{j^*+1}x_i x_{i+1}$.  

 \item If $x$ avoids both 101 and 210 then we can comment on pairs of descents.  Suppose $x_i > x_{i+1}$ and $x_j > x_{j+1}$ for $i<j$.  Then $x_{i+1}\leq x_{j+1}$ or else $x_i x_{i+1} x_{j+1}$ forms a 210.  Furthermore if $x_{i^*}$ is the rightmost letter to the left of $x_i$ which equals $x_{i+1}$, and $x_{j^*}$ is the rightmost letter to the left of $x_{j+1}$ which equals $x_{j+1}$, then $i^* < j^*$.  If $x_{i+1} = x_{j+1}$ then $j^*\geq i+1$, and if $x_{i+1} < x_{j+1}$ then having $j^*< i^*$ would imply a 101 pattern by $x_{j^*} x_{i^*} x_{j+1}$.  
 \end{enumerate}

Observations (i) and (ii) imply that for any pair of adjacent letters $x_i x_{i+1}$ for $x\in \mathcal{A}_{101,210}(n)$ there are four possible behaviors:
 \begin{enumerate}[(A)]
  \item $x_i = x_{i+1}$
  \item $x_i > x_{i+1}$
  \item $x_i < x_{i+1}$ where $x_i=d$ is the last occurrence of the value $d$ in $x_1\dotsm x_{i}$ before there is a descent of the form $x_j d$. In terms of observation (ii) above, $x_i$ plays the role of $x_{j^*}$ for some later descent.
  \item $x_i < x_{i+1}$ where $x_i=d$ is \emph{not} the last occurrence of the value $d$  in $x_1\dotsm x_{i}$ before there is a descent of the form $x_j d$
 \end{enumerate}
 These behaviors imply an encoding of any ascent sequence by a word in $w\in \{A,B,C,D\}^{n-1}$, where $w_i$ takes the value corresponding to the behavior of $a_i$ as listed above.  Observation (ii) above implies that each descent is preceded by an ascent and so behaviors (B) and (C) appear equally often with the first instance of (C) preceding the first (B).  The second half of observation (iii) above implies that the instances of (C) and (B) will in fact alternate, with each (C) followed by a (B) before another (C) occurs.  Therefore any $x\in \mathcal{A}_{101,210}(n)$ is encoded as a word in $\{A,B,C,D\}^{n-1}$ where $C$ and $B$ alternate with $C$ appearing first.  We will call such words \emph{$CB$-alternating}.  

There is an obvious bijection between $CB$-alternating words and ternary strings of the same length with an even number of 2's under the mapping $A\mapsto 0$, $B\mapsto 2$, $C\mapsto 2$, and $D\mapsto 1$.  Therefore it remains to show that the encoding of ascent sequences by $CB$-alternating words is bijective when the domain is restricted to $\mathcal{A}_{101,210}(n)$, which we will do by constructing the inverse. 

Let $w_1 \dotsc w_{n-1}$ be a $CB$-alternating word, and we will construct the corresponding $x\in\mathcal{A}_{101,210}(n)$.  Let $x_1=0$, and for $2\leq i \leq n$ determine $x_i$ as follows.  If $w_{i-1}=A$, let $x_i=x_{i-1}$.  If $w_{i-1}=C$ or $w_{i-1}=D$, let $x_i = \max(x_1, \dotsc, x_{i-1})+1$.  If $x_{i-1}=B$, let $x_i$ take the same value as $x_{j}$ where $w_j=C$ for the largest value of $j<i$.  
\end{proof}

To  illustrate the map, the ascent sequence $012131114 \in \mathcal{A}_{101,210}(9)$ corresponds to the word $DCBCBAAD$.   Inversely, $CAABDDCDB$ corresponds to the ascent sequence 0111023453.

\bigskip

We now consider a set of restrictions that ultimately leads to the all zeros sequence, producing an analogue of the Erd\H{o}s-Szekeres theorem.  We consider an all-zeros pattern $0^a = \underbrace{00\dotsc0}_{a}$ and the strictly increasing pattern $012\dotsm b$.  We begin with the pair of patterns of length 3.

\begin{prop}\label{allzeros}
$\mathrm{a}_{000,012}(n)=\begin{cases}
1&n=1\\
2&n=2\\
3&n=3\text{ or }n=4\\
0&n\geq 5\\
\end{cases}$
\end{prop}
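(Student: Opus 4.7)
The plan is to reduce the problem to a short structural observation in the Erd\H{o}s--Szekeres flavor alluded to in the surrounding discussion, and then finish with direct enumeration for small $n$. The key claim is that \emph{any ascent sequence avoiding the pattern $012$ uses only the values $0$ and $1$.} Granted this claim, every member of $\mathcal{A}_{000,012}(n)$ is a binary word; and avoiding $000$ forces at most two $0$'s and at most two $1$'s (recall that $111$ reduces to $000$), so necessarily $n \le 4$, and $\mathrm{a}_{000,012}(n) = 0$ for every $n \ge 5$.

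To establish the claim, I would argue by contradiction. Let $x$ avoid $012$ and suppose some letter is at least $2$; pick the minimal index $k$ with $x_k \ge 2$. The ascent-sequence condition gives $x_k \le \mathrm{asc}(x_1 \cdots x_{k-1}) + 1$, hence $\mathrm{asc}(x_1 \cdots x_{k-1}) \ge 1$, so some index $i$ with $i+1 \le k-1$ satisfies $x_i < x_{i+1}$. By the minimality of $k$, both $x_i$ and $x_{i+1}$ lie in $\{0, 1\}$, forcing $x_i = 0$ and $x_{i+1} = 1$; but then $\red(x_i x_{i+1} x_k) = 012$, a contradiction.

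The remaining verifications are routine. For $n = 1$ and $n = 2$, only $0$ and then $00, 01$ survive. For $n = 3$ the ascent sequences are $000, 001, 010, 011, 012$; discarding $000$ and $012$ leaves three. For $n = 4$, I would extend each of $001$, $010$, $011$ by one admissible letter: every extension by $2$ creates a $012$, while $0010$, $0100$, and $0111$ each contain a $000$ pattern (three $0$'s in the first two cases, three $1$'s in the last). The survivors are precisely $0011, 0101, 0110$, giving the count $3$. The sole conceptual step is the structural claim above; the rest is bookkeeping, and I do not anticipate a hidden obstacle.
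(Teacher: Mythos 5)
Your proof is correct and follows essentially the same route as the paper: $012$-avoidance forces the sequence to be a binary word (because the first ascent would have to be a literal $01$, which together with any letter $\geq 2$ gives a $012$), and $000$-avoidance then caps the length at $4$, with the small cases checked directly. Your list $\{0011,0101,0110\}$ at $n=4$ is in fact the correct one; the paper's printed set contains a typo, listing $0010$, which has three $0$'s and hence a $000$ pattern.
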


\begin{proof}
All ascent sequences of length less than 3 avoid these two patterns, which gives the first two cases.  It is easily checked that  $\mathcal{A}_{000,012}(3)=\{001, 010, 011\}$ and $\mathcal{A}_{000,012}(4)=\{0010, 0101, 0110\}$, which gives the third case.

Now, consider an arbitrary ascent sequence of length greater than 4.  It is easily seen that any ascent sequence avoiding 012 cannot have any digit greater than or equal to 2, since the first ascent must be a literal 01.   Any sequence with at least 5 digits consisting of only 0s and 1s either has three 0s or three 1s, which creates a 000 pattern in either case.  Therefore no ascent sequence of length 5 or greater can avoid both 000 and 012.
\end{proof}

A similar argument shows $\mathrm{a}_{0^a, 012}(n) = 0$ for $n \geq 2a-1$.  Avoiding $012$ restricts the ascent sequence to using only the digits $0$ and $1$, and avoiding $0^a$ means that we can only use each of these at most $a-1$ times.  Hence we can have at most $(a-1)(2) = 2a-2$ letters in any ascent sequence avoiding $0^a$ and $012$.  Thus we have proven the following corollary.

\begin{cor}
 $\mathcal{A}_{0^a, 012}(n)=\emptyset$ for $n\geq 2a-1$.
\end{cor}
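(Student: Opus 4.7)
The plan is to formalize the two-step argument sketched informally in the paragraph preceding the corollary: first pin down the alphabet of any $012$-avoiding ascent sequence, then apply a pigeonhole count using $0^{a}$-avoidance.

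First I would prove the lemma that any ascent sequence avoiding $012$ uses only the digits $0$ and $1$. The key observation is that the ascent-sequence condition forces the first appearance of each value to occur in increasing order: since $x_1 = 0$ and $x_i \leq \asc(x_1\dotsm x_{i-1}) + 1$, the smallest index at which a value $\geq 1$ appears must carry the value $1$ (because at that point the ascent count is still $0$). Now suppose for contradiction that some $x_k \geq 2$. Then by the same inductive reasoning there must be some earlier index $j < k$ with $x_j = 1$, and since $x_1 = 0$, the triple $x_1 x_j x_k$ reduces to $012$, contradicting $012$-avoidance. Hence every entry of the sequence lies in $\{0,1\}$.

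Second, I would apply the $0^{a}$-avoidance restriction to count. Since $0^a$ is the all-equal pattern of length $a$, avoiding $0^a$ means that no single value can appear $a$ or more times in the sequence. Combined with the previous step, each of the two values $0$ and $1$ appears at most $a-1$ times, so the total length is at most $2(a-1) = 2a - 2$. Therefore $\mathcal{A}_{0^a,012}(n) = \emptyset$ whenever $n \geq 2a - 1$.

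The only step that requires any genuine argument is the alphabet lemma in the first paragraph; the pigeonhole finish is immediate. No obstacle is anticipated, since the bound $x_i \leq \asc(x_1\dotsm x_{i-1})+1$ directly yields the inductive claim that the first occurrences of $0,1,2,\dots$ appear in order, which is exactly what converts a digit $\geq 2$ into a $012$ pattern.
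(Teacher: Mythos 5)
Your proof is correct and follows the same route as the paper: $012$-avoidance confines the sequence to the alphabet $\{0,1\}$, and $0^a$-avoidance caps each of those two values at $a-1$ occurrences, giving length at most $2a-2$. The only difference is that you spell out the alphabet lemma (which the paper dismisses as ``easily seen'' via the observation that the first ascent must be a literal $01$), and your argument for it is sound.
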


We can generalize further to see $\mathcal{A}_{0^a,012\dotsm b}(n)$ is also eventually empty.  If $x$ avoids $0^a$ and $012\dotsm b$ then the largest letter possible in $x$ is $(a-1)(b-2)+1$:  this largest letter could be preceded by at most $b-1$ different smaller values, and using at most $a-1$ copies each of these $b-1$ smaller values could only generate at most $(a-1)(b-2)$ ascents (as witnessed by repeating the increasing pattern $a-1$ times).  If every value in $x$ appears $a-1$ times, and every value from $0$ to the maximum $(a-1)(b-2)+1$ appears, then there can be at most $(a-1)((a-1)(b-2)+2)$ letters in $x$.  Thus we arrive at the following corollary:

\begin{cor}
  $\mathcal{A}_{0^a, 012\dotsm b}(n)=\emptyset$ for $n\geq (a-1)((a-1)(b-2)+2)+1$.
\end{cor}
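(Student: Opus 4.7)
The plan is to follow the informal sketch in the paragraph preceding the corollary and promote it to a proof. It splits into two steps: first bound the maximum letter of any $x\in\mathcal{A}_{0^a,012\dotsm b}(n)$, then use $0^a$-avoidance to bound the total length.

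First I would show that the maximum letter $M$ of any such $x$ satisfies $M\leq (a-1)(b-2)+1$. Let $i^*$ be the first index with $x_{i^*}=M$. The ascent-sequence axiom applied at position $i^*$ gives $M\leq \mathrm{asc}(x_1\dotsm x_{i^*-1})+1$, so it suffices to bound the number of ascents in the prefix $x_1\dotsm x_{i^*-1}$ by $(a-1)(b-2)$. Because appending $M$ to this prefix cannot create a copy of $012\dotsm b$ in $x$, the longest strictly increasing subsequence of $x_1\dotsm x_{i^*-1}$ has length at most $b-1$. I would then argue as in the paragraph above the corollary: each ascent in the prefix sits inside an increasing chain whose top belongs to a collection of at most $b-1$ distinct ``active'' values strictly less than $M$; excluding the smallest such value (which cannot itself be an ascent top) leaves at most $b-2$ possible ascent-top values, and each occurs at most $a-1$ times by $0^a$-avoidance, so the number of ascents is at most $(a-1)(b-2)$. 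The extremal example $(012\dotsm(b-2))^{a-1}$ saturates this bound.

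Second, with $M\leq (a-1)(b-2)+1$ in hand, $x$ uses at most $(a-1)(b-2)+2$ distinct values, each occurring at most $a-1$ times, so $n=|x|\leq (a-1)\bigl((a-1)(b-2)+2\bigr)$, giving the stated threshold $n\geq (a-1)((a-1)(b-2)+2)+1$ for emptiness.

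The main obstacle is making the first step rigorous: the informal phrase ``at most $b-1$ different smaller values precede $M$'' must be turned into a precise counting of ascents, since a priori the prefix can contain more distinct values than the length of its longest increasing subsequence. A clean way to do this is to define, for each position $j$, the level $L(j)$ to be the length of the longest strictly increasing subsequence of $x_1\dotsm x_j$ ending at $j$; every ascent top then has level in $\{2,\dotsc,b-1\}$, positions at a fixed level form a weakly decreasing subsequence of values, and one combines the ascent-sequence condition with $0^a$-avoidance to show that each of the $b-2$ admissible levels contains at most $a-1$ ascent tops, for a total of at most $(a-1)(b-2)$ ascents in $x_1\dotsm x_{i^*-1}$.
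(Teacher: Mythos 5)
Your two-step architecture is the same as the paper's (bound the maximal letter $M$ by bounding the ascents in the prefix before its first occurrence, then multiply the number of distinct values by $a-1$), and you have correctly located the soft spot: the phrase ``at most $b-1$ different smaller values'' is not literally true, since a prefix can contain more distinct values than the length of its longest increasing subsequence (e.g.\ $010132$ has four distinct values but no increasing subsequence of length $4$). The problem is that your proposed repair does not close the hole: the assertion that each of the $b-2$ admissible levels contains at most $a-1$ ascent tops is exactly the hard content of the lemma, and the two facts you cite in support of it --- that the positions at a fixed level carry weakly decreasing values, and that $0^a$-avoidance caps each value at $a-1$ occurrences --- do not imply it. They would imply it only if all ascent tops at a given level shared a single value, and they need not: in the ascent sequence $0\,1\,0\,1\,3\,1\,2\,0\,2$ (valid, each value at most three times, longest increasing subsequence $3$) the ascent tops at level $3$ sit at positions $5$, $7$, $9$ and carry the values $3,2,2$. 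With $k$ distinct values at a level your cited facts only give a bound of $k(a-1)$ tops at that level, which is too weak.

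So as written the proposal is a restatement of the difficulty rather than a resolution of it; to finish, you must actually use the ascent-sequence axiom, e.g.\ by observing that every level-$1$ position has value $0$ and every level-$2$ position has value $1$ (so those levels genuinely contain at most $a-1$ positions each), that the bottom of an ascent whose top is at level $\ell$ must itself sit at level at most $\ell-1$, and that the appearance of each new large value costs ascents whose bottoms are drawn from this limited supply --- and then account for levels $\ell\geq 3$, whose positions are \emph{not} confined to a single value. In fairness, the paper's own justification is no more rigorous than yours (its ``$b-1$ different smaller values'' claim has the same defect you identified), but since you explicitly set out to promote the sketch to a proof, the unproved per-level bound is a genuine gap, not a routine verification.
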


Note that the above bound is tight.  An ascent sequence (of maximal length) avoiding $0^a$ and $012\dotsm b$ is witnessed by:
$$ \bigl(012\dotsm (b-2)\bigr)^{a-1} \, \bigl( (a-1)\cdot(b-2)+1 \bigr)^{a-1} \bigl( (a-1)\cdot(b-2) \bigr)^{a-1} \dotsm \bigl(b-1\bigr)^{a-1},$$
where we use the shorthand of $(w)^a$ to mean $a$ copies of the word $w$.

\bigskip

In the rest of this section we give constructive or bijective explanation to why various combinatorial sequences arise when considering particular pattern-avoiding ascent sequences.

\begin{prop}\label{naturalnums}
$\mathrm{a}_{000,011}(n) = n$ for $n \geq 1$.
\end{prop}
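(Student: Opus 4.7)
The plan is to show that the two restrictions together force each nonzero value in $x$ to appear at most once and the value $0$ to appear at most twice, after which the ascent-sequence condition pins down $x$ almost completely.

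First I would establish two occurrence bounds. The $000$-avoidance immediately caps the count of $0$s at two. For any $b \geq 1$, since every ascent sequence begins with $x_1 = 0$, two occurrences of $b$ at positions $j < k$ together with $x_1$ would form an $011$ pattern via $\red(x_1 x_j x_k) = 011$; hence each $b \geq 1$ appears at most once.

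Next I would split on the number of $0$s. If $0$ appears only at position $1$, all letters are distinct; then $x_2 \leq \asc(0)+1 = 1$ combined with distinctness forces $x_2 = 1$, and by induction $x_i = i-1$ for all $i$, yielding the single sequence $012\cdots(n-1)$. If $0$ appears at positions $1$ and $k$ with $2 \leq k \leq n$, the same inductive forcing shows $x_i = i-1$ for $2 \leq i \leq k-1$ (since $x_i$ must be distinct from prior letters, not equal to $0$, and bounded by $\asc(x_1 \cdots x_{i-1})+1 = i-1$), and then $x_{k+j} = k+j-2$ for $j \geq 1$. Each $k \in \{2,3,\dots,n\}$ therefore yields exactly one valid sequence, for $n-1$ total. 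Combined with the single sequence from the first case, the count is $n$.

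The only routine check is that at every step the candidate value genuinely satisfies $x_i \leq \asc(x_1\cdots x_{i-1})+1$; this is automatic because each strictly increasing step in the forced shape contributes an ascent, so the right-hand side grows in step with the forced value. No step appears to be a serious obstacle: the main content is simply recognizing that the combined pattern avoidance reduces the problem to deciding where (or whether) to insert a second $0$ into an otherwise strictly increasing sequence.
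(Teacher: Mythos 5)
Your proof is correct, and it takes a different route from the paper's. The paper argues by induction on $n$: every $a \in \mathcal{A}_{000,011}(n)$ admits the appended child $a\,(\asc(a)+1)$, the monotone sequence $012\cdots(n-1)$ additionally admits the child obtained by appending $0$, and no other extensions are valid, so the count grows by exactly one at each length. You instead give a direct structural classification: $000$-avoidance caps the number of $0$s at two, $011$-avoidance together with $x_1=0$ forces each positive value to occur at most once, and the ascent-sequence bound $x_i \leq \asc(x_1\cdots x_{i-1})+1$ then pins the sequence down to $012\cdots(n-1)$ or to $01\cdots(k-2)\,0\,(k-1)\cdots(n-2)$ for some $k\in\{2,\dots,n\}$, giving $1+(n-1)=n$ sequences. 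Your key observation that two copies of any $b\geq 1$ combine with $x_1=0$ to give $\red(x_1x_jx_k)=011$ is clean and correct, and your forcing argument along the sequence is sound. What your approach buys is an explicit description of $\mathcal{A}_{000,011}(n)$ and a fully verifiable count with no appeal to ``these are precisely the members''; what the paper's approach buys is brevity and a generating-tree flavor consistent with its later sections. One small point worth making explicit in a final write-up: the candidate sequences you produce do avoid $011$ because a $011$ occurrence requires a repeated value that is \emph{not} the minimum, and in your sequences only $0$ ever repeats.
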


\begin{proof}
Given $a \in \mathcal{A}_{000,011}(n)$, one may always append $\mathrm{asc}(a)+1$ onto the end of $a$.  Also, the monotone increasing sequence is always a member of $\mathcal{A}_{000,011}(n)$ and one may also append 0 onto the end of it.  These are precisely the $n+1$ members of $\mathcal{A}_{000,011}(n+1)$.
\end{proof}



\begin{prop}\label{fib}
$\mathrm{a}_{000,001}(n) = F_{n+1}$
\end{prop}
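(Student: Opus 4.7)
The plan is to give an explicit structural characterization of $\mathcal{A}_{000,001}(n)$ and then count via a standard Fibonacci binomial identity.

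First I would show that if an ascent sequence $x_1\cdots x_m$ has all distinct letters, then $x_i = i-1$ for every $i$. This follows by a short induction on $i$: the ascent-sequence constraint gives $x_i \le \mathrm{asc}(x_1\cdots x_{i-1})+1 = i-1$, and distinctness from $\{0,1,\ldots,i-2\}$ then forces equality. Consequently, for any $x \in \mathcal{A}_{000,001}(n)$, if $k$ is the largest index such that $x_1,\ldots,x_k$ are pairwise distinct, the initial block must be $x_i = i-1$ for $1 \le i \le k$.

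Next I would analyze the suffix $x_{k+1},\ldots,x_n$ in the case $k < n$. By choice of $k$, the letter $y_1 := x_{k+1}$ repeats some prefix letter, so $y_1 \in \{0,1,\ldots,k-1\}$, giving $y_1$ two occurrences in $x_1\cdots x_{k+1}$. Avoiding $001$ then forces $x_{k+2} \le y_1$, and avoiding $000$ forces $x_{k+2} \ne y_1$, so $x_{k+2} < y_1$. Since $x_{k+2} < y_1 \le k-1$, the value $x_{k+2}$ already appeared once in the prefix, so after position $k+2$ it too has two occurrences, and the same argument gives $x_{k+3} < x_{k+2}$. Iterating, the suffix is strictly decreasing with all entries in $\{0,1,\ldots,k-1\}$. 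Conversely, a direct check shows that every sequence of the form $0,1,\ldots,k-1,y_1,y_2,\ldots,y_{n-k}$ with $y_1 > y_2 > \cdots > y_{n-k}$ chosen from $\{0,1,\ldots,k-1\}$ is an ascent sequence avoiding both $000$ and $001$: no value appears three times (killing $000$), and any twice-occurring value in the sequence must lie in the prefix and the suffix and have only smaller values to its right (killing $001$).

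Given this characterization, each element of $\mathcal{A}_{000,001}(n)$ is determined by a choice of $k \in \{1,\ldots,n\}$ together with an $(n-k)$-element subset of $\{0,\ldots,k-1\}$ specifying the suffix values, so
\[ \mathrm{a}_{000,001}(n) = \sum_{k=1}^{n} \binom{k}{n-k} = \sum_{j=0}^{\lfloor n/2 \rfloor} \binom{n-j}{j} = F_{n+1} \]
by the classical Fibonacci binomial identity (using the substitution $j = n-k$; terms with $k < \lceil n/2 \rceil$ vanish). The only non-routine step is the iterative argument in the second paragraph that the suffix is forced to be strictly decreasing; once that is in hand, the binomial identity closes the proof immediately.
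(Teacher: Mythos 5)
Your argument is correct, and its core is the same structural characterization the paper uses: every element of $\mathcal{A}_{000,001}(n)$ is a strictly increasing prefix $0,1,\dotsc,k-1$ followed by a strictly decreasing arrangement of repeated values (the paper phrases this as $a = 012\dotsm\ell(a_d)$ with $a_d$ the remaining digits in decreasing order). Where you diverge is only in the final count. The paper deletes the largest digit (one or two copies of it) to obtain the recurrence $\mathrm{a}_{000,001}(n) = \mathrm{a}_{000,001}(n-1) + \mathrm{a}_{000,001}(n-2)$ directly, whereas you enumerate in closed form by choosing the prefix length $k$ and the $(n-k)$-element set of suffix values, arriving at $\sum_{k}\binom{k}{n-k} = \sum_{j}\binom{n-j}{j} = F_{n+1}$ via the diagonal-sum identity for Pascal's triangle. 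Both are valid; the recurrence route is slightly more self-contained (no external identity needed), while your direct count has the small bonus of refining the enumeration by prefix length (equivalently, by the number of repeated digits). Your forward and converse verifications of the characterization are sound: the iterative step forcing the suffix to be strictly decreasing is exactly right, since each suffix letter immediately acquires a second occurrence (its first being in the prefix), whereupon $001$-avoidance caps the next letter and $000$-avoidance forbids equality. One could quibble that you should note explicitly that $k$ is recoverable from the sequence (so the $(k,S)$ parametrization is injective), but that is immediate since the suffix values all lie in $\{0,\dotsc,k-1\}$, so position $k+1$ is the first repeat.
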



This result also appears in \cite{MS15} as Proposition 2.1.  We include a proof below for completeness.  Mansour and Shattuck also show in Proposition 3.3 of \cite{MS15} that $\mathrm{a}_{000,010} = F_{n+1}$, which follows from the fact that $\mathrm{a}_{000,10}=F_{n+1}$.

\begin{proof}
Consider $a \in \mathcal{A}_{000,001}(n)$.  Because $a$ avoids 000, there are either 1 or 2 copies of each digit that appears in $a$.  Because $a$ avoids 001, if there are two copies of $k$ and $k$ is not the largest digit, then one digit occurs before the largest digit, and one appears after.  In fact, such sequences have the form $a=012\dotsm \ell(a_d)$, where $\ell$ is the largest digit, and $a_d$ is all remaining digits in decreasing order.

By listing, we see that there is one such sequence of length $1$, and there are two such sequences of length $2$.  The number of sequences one copy of the largest digit is given by $\mathrm{a}_{000,001}(n-1)$ and the number of sequences with two copies of the largest digit is given by $\mathrm{a}_{000,001}(n-2)$.
\end{proof}

\begin{prop}\label{caterer}
$\mathrm{a}_{011,100}(n)=\binom{n}{2}+1$
\end{prop}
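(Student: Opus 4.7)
The plan is to give an explicit description of the sequences in $\mathcal{A}_{011,100}(n)$ and then enumerate them directly. First I would use $011$-avoidance to conclude that only the value $0$ may be repeated: any positive value appearing twice would, together with the initial $x_1=0$, realize the pattern $011$. Next I would use $100$-avoidance to pin down the positions of the zeros. If the zero positions are $j_1<j_2<\dotsm<j_r$, then for each pair $(j_a,j_b)$ with $a<b$ no positive letter may precede $j_a$; applied to the pair $(j_{r-1},j_r)$ this forces positions $1,\dotsc,j_{r-1}-1$ to be zeros whenever $r\ge 3$, hence $j_{r-1}=r-1$. Thus when $r\ge 3$ the zeros occupy positions $1,2,\dotsc,r-1$ together with one more position $j_r\in\{r,\dotsc,n\}$; for $r\le 2$ only $j_1=1$ is required.

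Third I would show that the positive letters are forced to be $1,2,3,\dotsc$ in order of appearance. By induction on the number $k$ of distinct positives seen so far, the prefix has the shape $0^{a_0}\,1\,0^{a_1}\,2\,0^{a_2}\dotsm k\,0^{a_k}$ with $a_0\ge 1$ and $a_i\ge 0$, and a direct check shows that it has exactly $k$ ascents---one at each transition from $i$ to $i+1$, whether or not zeros intervene. The ascent-sequence bound then makes any subsequent positive at most $k+1$, and distinctness forces it to equal $k+1$. Consequently every $x\in\mathcal{A}_{011,100}(n)$ is determined by $r$ together with, when $r\ge 2$, the position $j_r$ of the last zero.

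Summing over these choices gives
\[
\underbrace{1}_{r=1}\;+\;\underbrace{(n-1)}_{r=2}\;+\;\underbrace{\sum_{r=3}^{n}(n-r+1)}_{r\ge 3} \;=\; 1+(n-1)+\binom{n-1}{2} \;=\; \binom{n}{2}+1,
\]
which is the claimed count. The main obstacle is the ascent-counting step: one must verify that the total number of ascents in the prefix depends only on how many distinct positives have appeared and not on the particular interleaving of zeros among them, so that the induction forcing $v_i=i$ goes through cleanly.
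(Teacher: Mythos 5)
Your argument is correct and follows essentially the same route as the paper's proof: $011$-avoidance forces the positive letters to be distinct (hence $1,2,3,\dotsc$ in order), $100$-avoidance forces all zeros but possibly the last to precede every positive letter, and the count reduces to choosing the number of zeros and the position of the final zero, which is just a reparametrization of the paper's $(\ell,m)$. The ascent-counting step you flag as the main obstacle does go through — each segment from one positive value to the next contributes exactly one ascent regardless of intervening zeros — and is in fact a point the paper's own proof passes over without comment.
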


\begin{proof}
Consider $a \in \mathcal{A}_{011,100}(n)$.  Because $a$ avoids $011$, any nonzero letter of $a$ appears exactly once.  It immediately follows that the nonzero letters of $a$ form a strictly increasing subsequence.  Furthermore since $a$ avoids $100$, there can be at most one $0$ following the first nonzero letter of $a$.  Therefore $a$ has one of the two following structures:

\begin{tabular}{cl}
  Case 1: & $\underbrace{00\dotsc0}_{\ell} \, 123\dotsc (n-\ell)$ \\
  Case 2: &   $\underbrace{00\dotsc0}_{\ell} \, 123\dotsc m\,0\,(m+1)(m+2)\dotsc (n-\ell-1)$
\end{tabular}

There are clearly $n$ sequences with the structure in Case 1, including the all-zeroes sequence.  For Case 2, $\ell$ and $m$ range over the parameter space $\ell \geq 1$, $m\geq 1$, and $\ell + m + 1 < n$.  Note that this includes the sequences of the form $0\dotsc0\,123\dotsc (n-\ell-1)\,0$.  It follows that there are $1+2+\dotsm+(n-2) = \binom{n-1}{2}$ such ascent sequences.  Therefore there are $n+\binom{n-1}{2} = \binom{n}{2}+1$ ascent sequences in $\mathcal{A}_{011,100}(n)$.

Combinatorially the formula $\binom{n}{2}+1$ can be described as follows.  For the $n$ slots for the letters of the sequence, we mark two of them.  If the first slot is marked, then we write zeros until the second marked slot and then proceed with $12\dotsc$ until we reach the end.  This results in a sequence as described in Case 1.  If the first slot is unmarked, then we write zeros from the first slot until the first marked slot, then switch to writing the strictly increasing sequence $123\dotsc$ with the exception of the second marked slot which gets a $0$.  This method fails to create the all zeros sequence, which is the extra ``$+1$'' of the formula.
\end{proof}

\begin{prop}\label{fib1}
$\mathrm{a}_{001,100}(n)=F_{n+2}-1$
\end{prop}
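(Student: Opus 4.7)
The plan is to first establish a structure theorem for $\mathcal{A}_{001,100}(n)$: every such ascent sequence has the form
\[
x = 0, 1, 2, \dotsc, k, \underbrace{k, k, \dotsc, k}_{\ell \text{ copies}}, d_1, d_2, \dotsc, d_s,
\]
where $k \geq 0$, $\ell \geq 0$, $s \geq 0$, the $d_i$ are strictly decreasing with $d_i \in \{0, 1, \dotsc, k-1\}$, and $(k+1) + \ell + s = n$. The case $k = 0$ recovers $0^n$; the case $s = 0$ means the decreasing tail is empty.

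The easier direction---verifying every $x$ of this form lies in $\mathcal{A}_{001,100}(n)$---is routine. The only repeated values in $x$ are $k$, which appears inside the central block, and each $d_i$, which appears once in the strictly increasing prefix (at position $d_i+1$) and once in the tail. Each repeated pair is followed only by weakly smaller values (ruling out 001) and preceded only by strictly smaller values (ruling out 100), so $x$ avoids both patterns.

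The main obstacle is the converse. First I would observe that any $x \neq 0^n$ must begin with $01$, since if $x_1 = x_2 = 0$ then any later nonzero entry would complete a 001 pattern. Given the start $x_1 x_2 = 01$, I would then show by induction that $x_j = j - 1$ for $j = 1, \dotsc, k+1$, where $k = \max x$: assuming $x_1 \dotsm x_j = 0, 1, \dotsc, j-1$ with $j \leq k$, the ascent condition forces $x_{j+1} \leq j$; if $x_{j+1} = v < j$, then $v$ appears at positions $v+1$ and $j+1$, so the later attainment of value $k > v$ (which must occur) produces a 001. An analogous 001 argument rules out any entry equal to $k$ past the central $k$-block. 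Finally, after the $k$-block, the tail entries $d_1, \dotsc, d_s$ must be strictly decreasing: a flat step $d_i = d_{i+1} = w$ together with $x_{k+1} = k > w$ yields a 100 pattern $k, w, w$; and an ascent $d_i < d_{i+1} = w$ with $u := d_i$ yields a 001 pattern via the two copies of $u$ (at positions $u+1$ and in the tail) followed by $w$.

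Once the structure theorem is established, the count is immediate:
\[
\mathrm{a}_{001,100}(n) = \sum_{k \geq 0} \sum_{\substack{0 \leq s \leq k \\ k + 1 + s \leq n}} \binom{k}{s},
\]
with $\binom{k}{s}$ enumerating $s$-subsets $\{d_1, \dotsc, d_s\} \subseteq \{0, 1, \dotsc, k-1\}$ and $\ell = n - 1 - k - s$ determined. The first difference telescopes to $\mathrm{a}_{001,100}(n) - \mathrm{a}_{001,100}(n-1) = \sum_{k + s = n-1,\; 0 \leq s \leq k} \binom{k}{s}$, which equals $F_n$ by the classical identity $\sum_{s \geq 0} \binom{m-s}{s} = F_{m+1}$. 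Summing from $\mathrm{a}_{001,100}(1) = 1$ via $F_1 + F_2 + \dotsb + F_n = F_{n+2} - 1$ yields $\mathrm{a}_{001,100}(n) = F_{n+2} - 1$, as required.
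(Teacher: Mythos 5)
Your argument is correct. You and the paper both start from the same structural characterization: every member of $\mathcal{A}_{001,100}(n)$ other than $0^n$ is a strictly increasing run $0\,1\dotsm k$, followed by extra copies of the maximum $k$, followed by a strictly decreasing tail of distinct values below $k$ (the paper phrases this as ``$01\dotsm i$ followed by a decreasing arrangement of the remaining digits,'' each non-maximal digit occurring at most twice). Where you genuinely diverge is the counting step. The paper converts the structure into the recurrence $\mathrm{a}_{001,100}(n) = \mathrm{a}_{001,100}(n-1) + \mathrm{a}_{001,100}(n-2) + 1$ by a case analysis on the number of copies of the largest digit, deleting one copy, two copies, or (when there are three or more) all the zeros, and then matches this recurrence against $F_{n+2}-1$. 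You instead parametrize the class by $(k,s)$ and the $s$-subset of $\{0,\dotsc,k-1\}$ forming the tail, obtain the closed sum $\sum_{k}\sum_{s\le k}\binom{k}{s}$ over $k+1+s\le n$, take first differences, and invoke the diagonal identity $\sum_s\binom{m-s}{s}=F_{m+1}$ together with $F_1+\dotsb+F_n=F_{n+2}-1$. Your route makes the structure theorem and the appearance of the binomial coefficients fully explicit, and your verification of the converse (forcing $x_j=j-1$ in the prefix, contiguity of the $k$-block, strict decrease of the tail) is more detailed than the paper's; the cost is reliance on two classical Fibonacci identities, whereas the paper's deletion argument reaches the Fibonacci recurrence directly but leaves more of the structural verification to the reader.
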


\begin{proof}
Consider $a \in \mathcal{A}_{001,100}(n)$ and suppose $i$ is the largest digit in $a$.  For any digit $d$ other than $i$, there are at most 2 copies of $d$: otherwise, there will either be two copies of $d$ before the first $i$, or there will be two copies of $d$ after the first $i$. Since there can be at most one copy of each other digit $d$ before the first $i$, $a$ must have the form $01\dotsm i a_d$, where $a_d$ is a decreasing arrangement of the remaining digits.

Now, if there is one copy of largest digit, delete it to get a member of $\mathcal{A}_{001,100}(n-1)$ with one or two copies of new largest digit.
If there are two copies of largest digit, delete both to get a member of $\mathcal{A}_{001,100}(n-2)$ with one or two copies of new largest digit.
If there are three to $n-1$ copies of largest digit, delete all $0s$ and reduce to get a member of $\mathcal{A}_{001,100}(n-1) \cup \mathcal{A}_{001,100}(n-2)$ with 3 or more copies of the new largest digit.
We have now related all but one member (the all zeros sequence) of $\mathcal{A}_{001,100}(n)$ to a unique member of $\mathcal{A}_{001,100}(n-1) \cup \mathcal{A}_{001,100}(n-2)$, which shows that $\mathrm{a}_{001,100}(n) = \mathrm{A}_{001,100}(n-1)+\mathrm{A}_{001,100}(n-2)+1$, the same recurrence satisfied by $F_{n+2}-1$.  Upon verifying the base cases the proof is complete.
\end{proof}

\begin{prop}\label{cake} 
$\mathrm{a}_{001,210}(n)=\binom{n}{3}+n$
\end{prop}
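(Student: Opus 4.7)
The plan is to classify each sequence in $\mathcal{A}_{001,210}(n)$ by its maximum value and the shape of what follows the first occurrence of that maximum, then sum the counts.

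First I would establish that every $x \in \mathcal{A}_{001,210}(n)$ has the form $x = 012\cdots m \cdot y$, where $m = \max(x)$ and $y$ is a (possibly empty) suffix. Let $p$ be the position where the value $m$ first appears. If any value were repeated among $x_1 \dotsm x_{p-1}$, the two equal letters together with $x_p = m$ would form a $001$ pattern, so the letters before position $p$ are pairwise distinct. The ascent-sequence condition $x_i \leq \asc(x_1 \dotsm x_{i-1}) + 1$ combined with distinctness then forces $x_i = i-1$ by induction on $i$, so the prefix is exactly $0, 1, \dotsc, m$ and $p = m+1$.

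Next I would characterize the suffix $y = y_1 \dotsm y_\ell$ of length $\ell = n - m - 1$. Since all values from $0$ to $m$ already appeared in the prefix, each $y_i$ is a repetition of a prior value; hence $001$-avoidance imposes a non-increasing ``cap'' on the subsequent letters: once $y_i$ takes some value $v$, every later $y_j$ satisfies $y_j \leq v$. Meanwhile, $210$-avoidance combined with the prefix letter $m$ forbids any descent $y_i > y_j$ with $y_i < m$. Combining these constraints, I would argue that $y$ must equal $m^s w^{\ell - s}$ for some $0 \leq s \leq \ell$ and some $w \in \{0, 1, \dotsc, m-1\}$, with the $w$-run interpreted as empty when $s = \ell$. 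Finally, for each $m \in \{0, 1, \dotsc, n-1\}$ there is one suffix with $s = \ell$ and $m$ choices of $w$ for each of the $\ell$ values $s \in \{0, 1, \dotsc, \ell-1\}$, giving $1 + m(n-m-1)$ suffixes; summing yields
\[
\sum_{m=0}^{n-1} \bigl(1 + m(n-m-1)\bigr) = n + \binom{n}{3}.
\]

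The main obstacle is the suffix characterization, because it requires the two pattern restrictions to interact: $001$-avoidance gives an upper bound on later letters (forcing them $\leq$ the most recent repeated value), while $210$-avoidance via the prefix's $m$ gives a lower bound (forcing them $\geq$ that value once it drops below $m$). Used alone, either restriction permits richer tails; used together they pin the suffix to a single constant once it ever falls below $m$, which is precisely what rules out patterns like $y = m^s w_1^{s'} w_2^{t}$ with $m > w_1 > w_2$.
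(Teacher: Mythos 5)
Your proof is correct and takes essentially the same approach as the paper's: both arguments establish that every member of $\mathcal{A}_{001,210}(n)$ has the normal form $0\,1\,2\cdots d_1\,(d_1)^{*}(d_2)^{*}$ (the paper via the RGF lemma and the first descent, you via the maximum value and a direct $001$-avoidance argument for the prefix) and then count these forms. The only difference is bookkeeping: the paper bijects the descent case with $3$-element subsets of $\{1,\dots,n\}$, while you sum $1+m(n-m-1)$ over $m$; both give $\binom{n}{3}+n$.
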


\begin{proof}
Since 001 is contained in 01012, Lemma 2.4 of \cite{DS11} implies that all ascent sequences in $\mathcal{A}_{001,210}(n)$ are RGFs.  Consider $a \in \mathcal{A}_{001,210}(n)$.  Either $a$ has a descent, or it does not.

If $a$ does not have a descent, $a$ automatically avoids 210, so we need only consider what an increasing ascent sequence that avoids 001 looks like.   In this case, as soon as $a$ has a repeated digit $d$, all remaining digits must be equal to $d$.  Further, $a$ must have the form $0123\dotsm dd\dotsm d$ since $a$ is an RGF.  There are $n$ such sequences.

Now, suppose that $a$ does have a descent.  Let $d_1 d_2$ be the first descent in $a$.  Consider digit $a_i$ appearing after $d_2$.  Then $a_i\geq d_2$, otherwise $d_1 d_2 a_i$ form a 210 pattern.  Further, $a_i \leq d_2$, for a slightly more subtle reason:  Because $a$ is an RGF, a copy of digit $d_2$ also appears before $d_1$.  If $a_i > d_2$, then $d_2 d_2 a_i$ form a 001 pattern.  Therefore, $a_i=d_2$.  Therefore if $a$ has a descent $d_1 d_2$, then $a$ has the form 
$$012\dotsm (d_1 -1) (d_1)^k(d_2)^{(n-k-d_1)}.$$
Thus, $a$ is uniquely defined by choosing $k$, $d_1$, and $d_2$.  Clearly $0 \leq d_2<d_1<n$.    So, choose $d_2$, as one of the $n-1$ values between $0$ and $n-2$.  Once $d_2$ is chosen, since $d_2<d_1<n$, there are $n-d_2-1$ choices for $d_1$.  Finally, $n-k-d_1\geq 1$ and $k \geq 1$, so $n-d_1\geq k+1 \geq 2$ and $n-d_1-1 \geq k \geq 1$, which indicates that once $d_1$ is chosen, there are $n-d_1-1$ choices for $k$.  Equivalently, let $i=d_2+1$, $j=d_1+1$. and $m=n+1-k$.  Now $\{i,j,m\}$ is a uniquely defined set of 3 distinct elements in $\{1,2,\dots,n\}$, which ensures there are precisely $\binom{n}{3}$ members of $\mathcal{A}_{001,210}(n)$ that have a descent.
\end{proof}

\begin{prop}\label{motzkin}
$\mathrm{a}_{000,101}(n)=M_n$, where $M_n$ denotes the $n^{th}$ Motzkin number.
\end{prop}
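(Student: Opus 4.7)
The approach is to exhibit a bijection between $\mathcal{A}_{000,101}(n)$ and Motzkin paths of length $n$. Since $101$ is a subpattern of $01012$, Lemma~\ref{RGFlemma} gives that every $a \in \mathcal{A}_{000,101}(n)$ is a restricted growth function, and avoiding $000$ forces each value to appear at most twice in $a$. The key structural observation is that if a value $d$ occurs at positions $i < k$, then every intermediate position must carry a value strictly greater than $d$: a smaller intermediate value would produce a $101$ pattern, and a second copy of $d$ is ruled out by $000$-avoidance. Thus the induced partition of $[n]$ by common value is a non-crossing partition whose blocks have size $1$ or $2$.

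First, I would define a map $\Phi : \mathcal{A}_{000,101}(n) \to \{U, D, F\}^n$ by reading $a$ left to right and assigning $F$ to each singleton value, $U$ to the first occurrence of a repeated value, and $D$ to its second occurrence. Then $\Phi(a)$ is a Motzkin path: the $U$'s and $D$'s match in pairs, and every prefix has at least as many $U$'s as $D$'s because each $D$ is preceded by its matched $U$. The non-crossing property established above ensures that this pairing agrees with the standard bracket matching on the resulting word.

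To invert $\Phi$, given a Motzkin path I would pair $U$'s with $D$'s via the standard non-crossing matching, then scan left to right, assigning labels $0, 1, 2, \ldots$ in order to each $U$ or $F$ encountered, with each $D$ inheriting the label of its paired $U$. The result is clearly an RGF, hence automatically an ascent sequence, because in any RGF each new maximum value is an ascent and the maximum grows by at most $1$ at a time, so $\mathrm{asc}(a_1 \cdots a_{i-1}) \geq \max(a_1, \ldots, a_{i-1})$ and the ascent-sequence growth bound $a_i \leq \mathrm{asc}(a_1\cdots a_{i-1})+1$ follows from the RGF bound $a_i \leq \max(a_1,\ldots,a_{i-1})+1$. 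Avoidance of $000$ is immediate; for $101$-avoidance, if $a_i = a_k$ with $i < k$ then $i$ must be a $U$ paired to the $D$ at $k$, and every intermediate position $j$ is either an $F$ inserted later in the scan or belongs to a $U$--$D$ pair nested inside $(i, k)$, so its label strictly exceeds $a_i$.

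The main obstacle is the inverse verification: one must confirm the three conditions (ascent-sequence growth, $000$-avoidance, and $101$-avoidance) simultaneously, while tracking the non-crossing matching carefully enough to rule out the $101$ pattern in every case. Once the bijection is established, the conclusion $\mathrm{a}_{000,101}(n)=M_n$ follows because Motzkin paths of length $n$ are enumerated by the $n$th Motzkin number.
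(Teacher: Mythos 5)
Your argument is correct, but it takes a different route from the paper. The paper's proof is recursive: it conditions on the last letter $d=a_n$ (one copy of $d$ versus two), uses the same key observation you make --- that every letter strictly between two occurrences of $d$ must exceed $d$ --- to split the sequence into two independent smaller members of $\mathcal{A}_{000,101}$, and thereby derives the Motzkin recurrence $f(n)=f(n-1)+\sum_{i=0}^{n-2}f(i)f(n-2-i)$, finishing by checking initial conditions. You instead package the same structural facts (RGF by Lemma~\ref{RGFlemma}, each value occurring at most twice, and the non-crossing nestedness of the value classes) into an explicit global bijection $\Phi$ with Motzkin paths, with $F$ for singletons and $U$/$D$ for the two occurrences of a doubled value; your inverse is forced because an RGF's first occurrences must carry the labels $0,1,2,\dots$ in order and a balanced $UD$-word has a unique non-crossing matching. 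Your verification of the inverse is complete as written: the RGF-implies-ascent-sequence step, the $000$-avoidance, and the $101$-avoidance (via nesting of matched pairs) are all handled, so the closing remark about a remaining ``obstacle'' undersells what you have already done. The trade-off is the usual one: the paper's recurrence is shorter, while your bijection is more informative --- it makes the correspondence explicit and could be used to transport statistics between $\mathcal{A}_{000,101}(n)$ and Motzkin paths.
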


\begin{proof}
Let $a \in \mathcal{A}_{000,101}(n)$ and consider the final digit $d=a_n$.  If $a$ has only one copy of $d$, delete it to obtain a sequence in $\mathcal{A}_{000,101}(n-1)$.  If there are two copies of $d$, everything between these two copies must be larger than $d$ (lest $a$ have a 101 pattern).  After reducing, these digits correspond to an ascent sequence in $\mathcal{A}_{000,101}(n-2-i)$ for some $i\geq 0$, and the subsequence before the first occurrence of $d$ is a member of $\mathcal{A}_{000,101}(i)$.

We have just shown that  for $f(n):=\mathrm{a}_{000,101}(n)$
$$f(n) =f(n-1) + \sum\limits_{i=0}^{n-2} f(i)\cdot f(n-2-i),$$
 which is the recurrence for the Motzkin numbers.  After verifying initial conditions the proof is complete.
\end{proof}

\section{A bijection with Dyck words}

Recall that a Dyck word of semilength $n$ is a word in $\{U,D\}^{2n}$ with $n$ $U$s and $n$ $D$s such that the first $k$ letters have at least as many $U$s as $D$s for all $1\leq k \leq 2n$.  A Dyck word $w=w_1 w_2 \dotsm w_{2n}$ is said to contain $DDUU$ if there are (consecutive) letters $w_i w_{i+1} w_{i+2} w_{i+3} = DDUU$, otherwise $w$ is said to avoid $DDUU$.  These have a natural interpretation as lattice paths with steps $U=(1,1)$ and $D=(1,-1)$ and the path starts and ends on, but never dips below, the $x$-axis.  Thus we have cause to refer to a $U$ as an ``upstep'' and a $D$ as a ``downstep.''  

\begin{prop}\label{GenCatalan}
$\sum_{n \geq 1} \mathrm{a}_{100,101}(n)x^n = \frac{(1-x)^2-\sqrt{1-4x+2x^2+x^4}}{2x^2}$.  Equivalently, the set $\mathcal{A}_{100,101}(n)$ is in bijection with the set of DDUU-avoiding Dyck words of semilength $n$.
\end{prop}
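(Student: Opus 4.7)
The plan is to match two generating trees and read off the closed form from a first-return decomposition of DDUU-avoiding Dyck words.

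First I would analyze $\mathcal{A}_{100,101}(n)$ structurally. By Lemma \ref{RGFlemma} (since $101$ is a subpattern of $01012$), every such ascent sequence is an RGF; combined with $100$-avoidance, I would show that for $a \in \mathcal{A}_{100,101}(n)$ a value $v$ can be legally appended if and only if either $v = \asc(a)+1$ (a new maximum) or $v$ already appears in $a$ as a contiguous run whose prefix is strictly less than $v$ and whose suffix is strictly greater than $v$ (treating empty prefixes or suffixes as vacuously strict). Label each $a$ by a pair $(k,\epsilon)$, where $k$ is the number of appendable values and $\epsilon = A$ if $a_n = \max(a)$ and $\epsilon = B$ otherwise. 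By tracking how each kind of extension alters the set of appendables --- in particular, appending an interior appendable value ``traps'' every larger old appendable by introducing a smaller value into its suffix --- I would verify the succession rules
\[(k, A) \to (1, B), (2, B), \ldots, (k-2, B), (k, A), (k+1, A),\]
\[(k, B) \to (1, B), (2, B), \ldots, (k-1, B), (k+1, A),\]
rooted at $(2, A)$, which corresponds to the singleton sequence $0$.

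Next I would place the same labels on DDUU-avoiding Dyck words. Writing each nonempty such word uniquely as $P = P''' U^r D^s$, where $U^r$ is the final ascent and $D^s$ is the trailing descent, the standard Catalan ECO produces children by inserting a new $UD$ at one of the $s+1$ positions in or immediately after $D^s$. A short case analysis on where a newly created $DDUU$ window could appear shows that the only forbidden insertion is the very first one, and only when $r=1$ and $P'''$ ends in $DD$. Labeling $P$ by $(s+1, A)$ when this first insertion is allowed and by $(s, B)$ otherwise, and computing the labels of each child (with $P'''$, $r$, $s$ updated accordingly), I would check that these Dyck-side labels satisfy exactly the succession rules above, with root $UD$ at $(2, A)$. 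This matches the two generating trees and yields the bijection $\mathcal{A}_{100,101}(n) \leftrightarrow \{DDUU\text{-avoiding Dyck words of semilength } n\}$.

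Finally, for the closed form I would use the first-return decomposition of DDUU-avoiding Dyck words: every nonempty such word factors uniquely as $U w_1 D w_2$ with $w_1, w_2$ DDUU-avoiding, and a forbidden $DDUU$ straddles the central $D$ precisely when $w_1$ is nonempty (so its last step is $D$) and $w_2$ begins with $UU$. Inclusion--exclusion on these two failure conditions yields the functional equation
\[x^2 D(x)^2 - (1-x)^2 D(x) + (1-x) = 0,\]
whose branch analytic at the origin matches the claimed closed form; the bijection from the first two stages then transfers this generating function to $\mathcal{A}_{100,101}$.

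The main obstacle will be the succession-rule verification on the ascent-sequence side: demonstrating that appending the $i$-th appendable value (in sorted order) leaves exactly $i-1$ old appendables intact, while distinguishing the three cases where the appended value is a new maximum, an old maximum currently sitting at the end, or a strictly interior appendable. Each case perturbs the $(k,\epsilon)$ label differently, and the ``contiguous run with strict prefix and strict suffix'' characterization of appendability must be leveraged carefully at every extension.
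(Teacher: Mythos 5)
Your proposal is correct, but it proves the proposition by a genuinely different route than the paper. The paper builds an \emph{explicit, non-recursive} bijection: it encodes a DDUU-avoiding Dyck word by the heights of its upsteps (an RGF), proves that in such a word the first forbidden pattern to close is always a $100$ rather than a $101$ (Property (4)), and then repairs all $100$'s by a ``lifting'' operation whose inverse is read off from the large jumps $x_{\ell-1}+1<x_\ell$; the closed-form generating function is then imported from Mansour--Shattuck and Sapounakis et al.\ rather than derived. You instead match two generating trees with the infinite label set $(k,\epsilon)$ and obtain the closed form yourself from a first-return decomposition. I checked your key claims: the appendability criterion (a contiguous run with strictly smaller prefix and strictly larger suffix) is exactly equivalent to ``no occurrence of $v$ is preceded by a larger value or followed by a smaller one,'' which is what appending $v$ must avoid; your two succession rules then follow (the interior append traps all larger appendables, the terminal-maximum append preserves the label, the new-maximum append increments it), and they agree with the Dyck-side rules obtained from peak insertion into the last descent, where the only forbidden site is the first one when $r=1$ and the prefix ends in $DD$. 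Your functional equation $x^2D^2-(1-x)^2D+(1-x)=0$ also checks out via $D_{UU}=D-1-xD$. What each approach buys: the paper's lifting map is explicit and computable in one pass, while your argument is self-contained (no citation needed for the generating function) and produces a clean structural lemma about $\mathcal{A}_{100,101}$; its costs are that the bijection is only the canonical recursive one induced by the tree isomorphism, and that the infinite label set rules out the transfer-matrix shortcut used elsewhere in the paper, which is precisely why you need the separate first-return argument. One small point worth recording if you write this up: for $101$-avoiding ascent sequences one has $\asc(a)=\max(a)$ (every ascent top is a first occurrence), so your ``$v=\asc(a)+1$'' and ``new maximum'' really do coincide.
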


Note that this corresponds to OEIS sequence A025242.   This particular sequence, dubbed ``generalized Catalan numbers,'' is explored by Mansour and Shattuck in \cite{MS11}.   Mansour and Shattuck considered the number of RGFs (stemming from set-partitions),  which avoid certain pairs of patterns of length 4.  In particular they listed all pairs of patterns of length 4 which result in this enumeration sequence.   One such pair is $(1211, 1212)$, which in our convention would be written (0100, 0101).  Ignoring the initial 0's shows a clear connection to our pair $(100,101)$.  Indeed, by Lemma \ref{RGFlemma} the ascent sequences avoiding 101 must be RGFs.  Furthermore, because the ascent sequences are RGFs, one contains 100 if and only if it contains 0100 and likewise one contains 101 if and only if it contains 0101.  Therefore the set of ascent sequences avoiding (100, 101) equals the set of ascent sequences avoiding (0100, 0101), which is also the set of RGFs avoiding (0100, 0101).  

The work of Mansour and Shattuck yields the generating function given above, and the work of Sapounakis et al. in \cite{STT07} connects that same generating function to the Dyck words of semilength $n$ with no DDUU factor.  

The remainder of this section is given over to a \emph{bijective} proof of the proposition.

Let $\mathcal{D}(n)$ be the set of Dyck words of semilength $n$ with no DDUU factor.  We will construct a bijection from $\mathcal{D}(n)$ to $\mathcal{A}_{100,101}(n)$ via the heights of the upsteps in conjunction with a ``lifting'' procedure designed to eliminate copies of 100 patterns.

In a Dyck word $d_1 \dotsm d_{2n}$, the \emph{height} of an upstep at $d_i$ is the number of $U$s minus the number of $D$s in $d_1\dotsm d_{i-1}$.   In the lattice path interpretation, this is the starting height of the upstep.  In the example below, each $U$ is marked with its height.

\begin{tabular}{*{14}{c}}
U&U&U&D&D&U&D&U&U&D&D&D&U&D \\
0&1&2& & &1& &1&2&&&&0& \\
\end{tabular}

Let $w(d) = w_1 \dotsm w_n$ where $w_i$ is the height of the $i^{th}$ upstep in Dyck word $d$.  This U-height word $w(d)$ has several important properties for our purposes:
 \begin{enumerate}
 \item[(1)] $w_i < w_{i+1}$ implies $w_{i+1} = w_{i} + 1$, and therefore $w$ is an RGF starting from 0.
 \item[(2)] Any Dyck path $d$ is uniquely determined by its U-height word, (i.e., $w(d) = w(d')$ implies $d=d'$).
 \item[(3)] If $d$ has no DDUU factor, then $w_i < w_{i+1}$ implies $w_{i-1} = w_i$ or $w_{i-1} = w_{i} - 1$ for $i\geq 2$.
 \end{enumerate}
Note that properties (1) and (2) hold for all Dyck paths, not just those which avoid DDUU.

Property (1) follows immediately from the observation that an upstep increases the height of the next upstep by at most 1.  Also note that $w_1=0$ since any Dyck word begins with an upstep.  For any digit $k>0$, its first appearance in $w$ must be immediately preceded by $k-1$ (a somewhat stronger condition than the RGF condition).

To prove Property (2), we will reconstruct Dyck word $d=d_1 \dotsm d_{2n}$ so that $w(d)$ matches a given $w_1 \dotsm w_n$.  As for any Dyck word, $d_1=U$ and we proceed to complete $d$ from left to right.  If $w_i \geq w_{i+1}$ then the $i^{th}$ upstep is followed by $w_i - w_{i+1} + 1$ downsteps, followed by the $(i+1)^{th}$ upstep.  If $w_i < w_{i+1}$ then by property (1) we see that the $i^{th}$ and $(i+1)^{th}$ upsteps are consecutive.  Last there are $w_{n}+1$ downsteps at the end of $d$.  In this way we have inverted the function $w(d)$ and so $d$ is uniquely determined.

Property (3) follows from the proof of Property (2) as follows.  As noted before, $w_i < w_{i+1}$ implies that the $i^{th}$ and $(i+1)^{th}$ upsteps of the Dyck path $d$ are consecutive.  Property (2) implies that if $w_{i-1} < w_{i}$ then  $w_{i-1} = w_{i} - 1$.  If $w_{i-1}>w_{i}$ then the proof of Property (2) implies there are $w_{i-1} - w_{i} + 1$ consecutive downsteps immediately preceding the $i^{th}$ upstep.  In particular there would be at least 2 downsteps preceding the UU formed by the $i^{th}$ and $(i+1)^{th}$ upsteps, which creates a DDUU factor.  Property (3) follows.

We need one additional property to prove the bijection constructed below does what is needed.  It states that, in a very specific sense, the first copy of 100 must finish before the first copy of 101 finishes.
\begin{enumerate}
 \item[(4)] Let $w=w(d)$ for a DDUU-avoiding Dyck path $d$.  Choose index $j$ minimally such that there exist indices $i$ and $k$ such that $i<j<k$ and $w_i w_j w_k$ forms a copy of either 100 or 101.  Then the minimal such $k$ will create a 100 pattern.
\end{enumerate}

To illustrate property (4), consider the $w=012112$, which generates the DDUU-avoiding Dyck path $d=UUUDDDUDUUDDD$.  There are three copies of 100 and 101 in $w$, occurring at indices $(i,j,k)\in \{(3,4,5), (3,4,6), (3,5,6)\}$.  Thus the minimal such $j$ is 4, and among those copies with $j=4$, the copy with minimal $k$ is $(3,4,5)$ which is a copy of 100 not 101.



Note that property (4) implies that if $w$ avoids 100 then $w$ must also avoid 101.  This motivates the focus on removing 100 patterns in the bijection.

The proof of Property (4) is as follows.  Choose $i$, $j$, and $k$ as described.  The $j^{th}$ upstep of $d$ is followed either by an upstep or downstep, which we consider in cases below.

\emph{Case 1:} Assume the $j^{th}$ upstep is followed by a downstep.  Then $w_j \geq w_{j+1}$.  If $w_j = w_{j+1}$, then $w_i w_j w_{j+1}$ is a copy of 100.  If $w_j > w_{j+1}$, then Property (1) implies the upsteps following the $(j+1)^{th}$ upstep will reach height $w_j$ (thus forming a 100 pattern) before reaching the greater height $w_i$ (which would create a 101) pattern.  Therefore in this case the result is proven.

\emph{Case 2:} Assume the $j^{th}$ upstep is followed by another upstep, which we will show results in a contradiction with how $j$ is chosen.  Then $w_{j}+1=w_{j+1}$.  
The UU formed by these two upsteps is part of a larger block of consecutive upsteps starting at some index $c$.  That is, $c$ is the minimal index such that $w_c < w_{c+1} < \dotsm < w_{j} < w_{j+1}$.  Note $c>i$ since $w_i > w_j$ necessitates some intervening downsteps.  By property (3), the DDUU-avoidance implies $w_{c-1} = w_c$.  Therefore $w_{i} w_{c-1} w_{c}$ is a copy of 100 for $c-1 < j$, contradicting the minimality of $j$.  

This concludes the proof of Property (4).

Now, we define our map $\phi : \mathcal{D}(n) \to \mathcal{A}_{100,101}(n)$ as follows.  It works by removing copies of 100 from left to right by ``lifting'' subwords of $w(d)$.  In the process we generate words which we denote $w^0, w^1, w^2, \dotsc$, the last of which will be our $\phi(d)$.

\begin{enumerate}
\item Given $d \in \mathcal{D}(n)$, construct $w(d)$.
\item If $w(d)$ contains no forbidden 100 or 101 patterns, then $\phi(d)=w(d)$.
\item If $w(d)$ contains a forbidden pattern, then Property (4) implies there is a 100 pattern and we proceed as follows.
\begin{enumerate}
\item Let $w^{0}=w(d)$ and let $m=0$.
\item Let $i<j<k$ be the indices so that $w_i w_j w_k$ is a copy of 100 in $w^{m}$, chosen such that $j$ is minimal and $k$ is minimal as in Property (4).  
\item Let $\ell$ be the smallest index such that $k<\ell$ and $w^{m}_k> w^{m}_\ell$.
\item Let $w^{m+1}$ be the word formed by adding $\max(w^{m}_1 \dotsm w^{m}_{k-1})+1-w^m_{k}$ to each digit of $w^{m}_{k} w^{m}_{k+1} \dotsm w^{m}_{\ell-1}$.  Also let $w^{m+1}_{i} = w^{m}_{i}$ for $i<k$ and $\ell \leq i \leq n$.
\item If $w^{m+1}$ has no forbidden pattern, then $\phi(d) = w^{m+1}$.  If not, increase $m$ by 1 and return to step (b).
\end{enumerate}
\end{enumerate}

We will call the action in step (d) a ``lift'' and the subword $w^{m}_{k} w^{m}_{k+1} \dotsm w^{m}_{\ell-1}$ a ``lifted part.''  Each lift removes at least one copy of 100, and cannot create any new copies.  Since $w^{0}$ has finitely many copies of 100 this process must terminate and so $\phi$ is well-defined.   

Consider the following example of $\phi$ in action.  Let $$d=UUUDDUDUUDDUDUUDDDUDUUDD,$$ and so $w(d) = 012112112001$.  There is a copy of 100 (several, in fact), and so we must perform the following lifts.  At each stage the first copy of 100 is in boldface and the portion to be lifted is underlined.

\begin{tabular}{l}
$w^{0} = 01\mathbf{21\underline{1}}\underline{2112}001$ \\
$w^{1} = 01213\mathbf{43\underline{3}}\underline{4}001$ \\
$w^{2} = 0\mathbf{1}2134356\mathbf{0\underline{0}}\underline{1}$ \\
$w^{3} = 012134356078$ \\
\end{tabular}

We next need to show $\phi(d) \in \mathcal{A}_{100,101}(n)$.  First observe $w^{m+1}$ an RGF so long as $w^{m}$   is an RGF since  $w^{m+1}_k = \max(w^{m+1}_1 \dotsm w^{m+1}_{k-1})+1$.  Therefore by Property (1), $\phi(d)$ will be an RGF and thus an ascent sequence.  By construction, $\phi(d)$ avoids 100 as well, and so can be sure $\phi(d) \in \mathcal{A}_{100}(n)$.

It remains to show $\phi(d)$ avoids 101.  Suppose for induction that $w^{m}$ satisfies Property (4), that is,  if $j$ is chosen minimally such that $w^{m}_i w^{m}_j w^{m}_k$ is a copy of 100 or 101, then the minimal such $k$ will create a copy of 100.  We will prove the same is true for $w^{m+1}$, where $w^{m}_k \dotsm w^{m}_{\ell-1}$ is the relevant lifted part.  Suppose  $w^{m+1}_a w^{m+1}_b w^{m+1}_c$ is a copy of 100 or 101 for minimal $b$.   Notice that the lifted part reduces to a word which itself is a U-height word for a DDUU-avoiding Dyck path of semilength $\ell - k$.  Therefore the Property (4) applies to this lifted part and so we see that first 101 pattern strictly within that lifted part (if there is any) would be preceded by a 100. Suppose, then that the lifted part avoids 100 (and thus 101) and thus the offending $w^{m+1}_a w^{m+1}_b w^{m+1}_c$ lies partly in the lifted part and partly outside.  Since the lifted part is greater than all digits to its left,  and the non-lifted digits remain the same, it follows that $k \leq a \leq \ell-1$ and therefore $c>\ell$.  Since $w^{m+1}_\ell < w^{m+1}_c$, every intermediate value must appear since $w^{m+1}_\ell w^{m+1}_{\ell} \dotsm w^{m+1}_c = w_\ell w_{\ell} \dotsm w_c$ which satisfies Property (1).   Therefore there must be a digit with the same value as $w^{m+1}_b$ appearing before $w^{m+1}_c$, which would create a 100 before a 101. 

Observe that  if $x=\phi(d)$ has letters $x_i$ and $x_{i+1}$ so that $x_{i}+1< x_{i+1}$, then a lift must have taken place that affected the $(i+1)^{th}$ letter but not the $i^{th}$.  This provides the key to the inverse mapping which acts as follows:

\begin{enumerate}
\item Given $x \in \mathcal{A}_{100,101}(n)$, let $\ell$ be the largest integer such that $x_{\ell-1}+1<x_\ell$ if such an integer exists.
\item If there is no such $\ell$, then $\phi^{-1}(x) = w^{-1}(x)$.
\item If there is such an $\ell$, then we proceed as follows
\begin{enumerate}
\item Let $x^0=x$ and set $m=0$.
\item Construct $x^{m+1}$ so that 

$x^{m+1}_i = \begin{cases}
x^m_i & i < \ell \\
x^m_i & i \geq \ell \text{ and } x^m_i < \left(x^m_{\ell}-x^m_{\ell-1} \right)\\
x^m_i - \left(x^m_{\ell}-x^m_{\ell-1} \right) & \text{otherwise}\\
\end{cases}$
\item If $x^{m+1}$ has no integer $\ell$ such that $x^{m+1}_{\ell-1}+1<x^{m+1}_\ell$, then $\phi^{-1}(x) = w^{-1}(x^{m+1})$.  If not, increment $m$ and return to step b.
\end{enumerate}
\end{enumerate}

This process terminates because there are a finite number of indices where  $x_{\ell-1}+1<x_\ell$, we remove such an occurrence with each iteration, and decrementing the tail of the word cannot produce any new indices with large jumps.  Furthermore, the end result is guaranteed to result in a word satisfying Property (1), and therefore corresponds to the $U$-height word for a Dyck path.

For example, consider $x=012134356078$.  Then we compute $\phi^{-1}(x)$ as follows, where at each stage the large jump is marked in bold face and the portion which is lowered is underlined.

\begin{tabular}{l}
$x^0 = 012134356\mathbf{0\underline{7}}\underline{8}$ \\
$x^1 = 012134\mathbf{3\underline{5}}\underline{6}001$ \\
$x^2 = 012\mathbf{1\underline{3}}\underline{4334}001$ \\
$x^3 = 012112112001$ \\
\end{tabular}

\noindent From $x^{3} = 012112112001$ we can reconstruct the original Dyck path to see $\phi^{-1}(x)=UUUDDUDUUDDUDUUDDDUDUUDD$.

Thus we have provided a bijection between DDUU-avoiding Dyck paths of semilength $n$ and ascent sequences avoiding 100 and 101 of length $n$.


\section{Counting by generating trees}

Generating trees have proven an extremely useful tool in pattern avoidance for permutations.  See \cite{V06} for a more detailed description and history.  In Propositions \ref{perm1}, \ref{perm2}, and \ref{perm3}  we employ generating trees to put pattern-avoiding ascent sequences in bijection with pattern-avoiding permutations.  In Proposition \ref{bifib} the generating tree connects the sequence to the Fibonacci numbers.

In Propositions \ref{perm1}, \ref{perm2}, and \ref{perm3} computation indicated that the sequences obtained for pattern-avoiding ascent sequences matched three different sequences that appear in the context of pattern-avoiding permutations, and that could be counted using the FINLABEL algorithm developed by Vatter.  That program derives a finitely labeled generating tree to describe permutations avoiding any set of permutation patterns that contain both the child of an increasing permutation and the child of a decreasing permutation.  Thus, it was reasonable to hunt for isomorphic generating trees for our sets of pattern-avoiding ascent sequences.

In short, a generating tree is a rooted labeled tree, such that the label of each vertex uniquely determines the labels of its children via \emph{succession rules}.  These rules, combined with a label for the root, then uniquely determine the tree and its labels.  If a generating tree for a family of objects $\mathcal{A}(n)$ is isomorphic to that of another family $\mathcal{B}(n)$, then we know $|\mathcal{A}(n)|=|\mathcal{B}(n)|$ for all $n$ (in addition to other structural similarities between the families).  Furthermore, if a generating tree uses only finitely many labels, then the Transfer Matrix Method of \cite[Section 4.7]{SEC1} immediately provides a (rational) generating function.

The first few levels of the tree of all ascent sequences is shown in Figure \ref{fig:treeall}.  The root is $0$ and the children of $x_1 \dotsm x_n$ are $x_1 \dotsm x_{n} x_{n+1}$ for each possible $x_{n+1}$ which yields an ascent sequence.   Thus we see the basic operation to be appending a letter to the end.  In each of Propositions \ref{perm1}, \ref{perm2}, and \ref{perm3}, the nodes of the tree will be labeled with ascent sequences, which helps to determine which additional letters may be appended while still avoiding the forbidden patterns.

\begin{figure}[ht]
\begin{center}
\begin{tikzpicture}[line cap=round,line join=round,>=triangle 45,x=1.0cm,y=1.0cm]
\clip(-6.46,2.48) rectangle (8.52,6.4);
\draw (0,6)-- (-4,5);
\draw (-4,5)-- (-5,4);
\draw (-4,5)-- (-2,4);
\draw (0,6)-- (4,5);
\draw (4,5)-- (1,4);
\draw (4,5)-- (3,4);
\draw (4,5)-- (5,4);
\draw (-5,4)-- (-5.76,3);
\draw (-5,4)-- (-4.44,3);
\draw (-2,4)-- (-3.2,3);
\draw (-2,4)-- (-2,3);
\draw (-2,4)-- (-1,3);
\draw (1,4)-- (0.16,3);
\draw (1,4)-- (0.7,3);
\draw (1,4)-- (1.48,3);
\draw (3,4)-- (2.46,3);
\draw (3,4)-- (3,3);
\draw (3,4)-- (3.58,3);
\draw (5,4)-- (4.76,3);
\draw (5,4)-- (5.48,3);
\draw (5,4)-- (6.3,3);
\draw (5,4)-- (7.16,3);
\begin{scriptsize}
\fill [color=black] (0,6) circle (1.5pt);
\draw[color=black] (0,6.2) node {$0$};
\fill [color=black] (-4,5) circle (1.5pt);
\draw[color=black] (-4,5.28) node {$00$};
\fill [color=black] (4,5) circle (1.5pt);
\draw[color=black] (4,5.28) node {$01$};
\fill [color=black] (-5,4) circle (1.5pt);
\draw[color=black] (-5,4.28) node {$000$};
\fill [color=black] (-2,4) circle (1.5pt);
\draw[color=black] (-2,4.28) node {$001$};
\fill [color=black] (1,4) circle (1.5pt);
\draw[color=black] (1,4.28) node {$010$};
\fill [color=black] (3,4) circle (1.5pt);
\draw[color=black] (3,4.28) node {$011$};
\fill [color=black] (5,4) circle (1.5pt);
\draw[color=black] (5,4.28) node {$012$};
\fill [color=black] (-5.76,3) circle (1.5pt);
\draw[color=black] (-5.76,2.8) node {$0000$};
\fill [color=black] (-4.44,3) circle (1.5pt);
\draw[color=black] (-4.44,2.8) node {$0001$};
\fill [color=black] (-3.2,3) circle (1.5pt);
\draw[color=black] (-3.2,2.8) node {$0010$};
\fill [color=black] (-2,3) circle (1.5pt);
\draw[color=black] (-2,2.8) node {$0011$};
\fill [color=black] (-1,3) circle (1.5pt);
\draw[color=black] (-1,2.8) node {$0012$};
\fill [color=black] (0.16,3) circle (1.5pt);
\draw[color=black] (0.1,2.8) node {$0100$};
\fill [color=black] (0.7,3) circle (1.5pt);
\draw[color=black] (0.76,2.8) node {$0101$};
\fill [color=black] (1.48,3) circle (1.5pt);
\draw[color=black] (1.48,2.8) node {$0102$};
\fill [color=black] (2.46,3) circle (1.5pt);
\draw[color=black] (2.3,2.8) node {$0110$};
\fill [color=black] (3,3) circle (1.5pt);
\draw[color=black] (3,2.8) node {$0111$};
\fill [color=black] (3.58,3) circle (1.5pt);
\draw[color=black] (3.63,2.8) node {$0112$};
\fill [color=black] (4.76,3) circle (1.5pt);
\draw[color=black] (4.76,2.8) node {$0120$};
\fill [color=black] (5.48,3) circle (1.5pt);
\draw[color=black] (5.48,2.8) node {$0121$};
\fill [color=black] (6.3,3) circle (1.5pt);
\draw[color=black] (6.3,2.8) node {$0122$};
\fill [color=black] (7.16,3) circle (1.5pt);
\draw[color=black] (7.16,2.8) node {$0123$};
\end{scriptsize}
\end{tikzpicture}
\end{center}
\caption{Tree of ascent sequences}
\label{fig:treeall}
\end{figure}
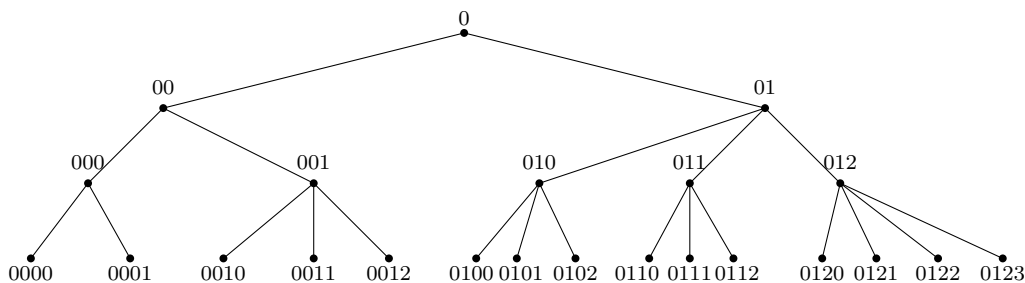

An important observation in the subsequent proofs is that consecutive copies of the same value beyond the first have no effect on pattern containment for patterns without repeated letters.  Formally, if $x$ can be decomposed into $x=X aa\dotsm a Y$, and $p$ is a pattern without consecutive repeated letters, then $x$ contains $p$ if and only if $XaY$ contains $p$.  It follows then that the same letters may be appended to $X aa\dotsm a Y$ and  $XaY$, and so these two ascent sequences have sets of children with the same multiset of labels.  We summarize these observations in a lemma:

\begin{lemma}\label{gtlem}
 For ascent sequences $ x = X aa\dotsm a Y$ and $x' = X a Y$ avoiding a pattern $p$ without consecutive repeated letters, the ascent sequence $xb$ is a child of $x$ if and only if $x'b$ is a child of $x'$ for any $b\geq 0$.   In terms of generating trees, $x$ and $x'$ may be given the same label since they produce isomorphic sets of children.
\end{lemma}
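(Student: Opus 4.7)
My plan is to decompose the requirement that $xb$ be a child of $x$ in the generating tree into two independent conditions, and then verify that each holds for $xb$ if and only if it holds for $x'b$. The two conditions are (i) that $xb$ is itself an ascent sequence, i.e.\ $b \leq \mathrm{asc}(x)+1$, and (ii) that $xb$ avoids the pattern $p$. Both are statements about the single letter $b$, so matching them on the two sides gives the desired bijection of child sets.

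For (i), I would observe that collapsing the block $aa\dotsm a$ to a single $a$ creates and destroys no ascents: there are none internal to the block, and at the boundaries the neighbors are unchanged while the block's boundary value is still $a$ on both sides. Hence $\mathrm{asc}(x)=\mathrm{asc}(x')$, and the legal range of $b$ is literally identical for the two parent sequences. For (ii), I would invoke the observation stated just before the lemma, that $X\,aa\dotsm a\,Y$ contains $p$ iff $XaY$ contains $p$, applied with $Y$ replaced by $Yb$, to conclude that $xb$ contains $p$ iff $x'b$ does.

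If the observation itself needs proof, the argument I have in mind is as follows. The ``if'' direction is immediate, since $x'b$ is a subsequence of $xb$. For the ``only if'' direction, consider an occurrence of $p$ in $xb$ at indices $i_1<\dotsm<i_k$; I would argue that no two successive chosen indices $i_s, i_{s+1}$ can both lie in the $a$-block, because otherwise the corresponding chosen letters would both equal $a$, forcing $p_s = p_{s+1}$ after reduction and contradicting the assumption that $p$ has no consecutive repeated letters. Thus at most one chosen index lies in the block, and the occurrence transfers verbatim to $x'b$. The main obstacle is precisely this step: the hypothesis ``no consecutive repeats in $p$'' must be used exactly as stated rather than the stronger ``no repeats at all,'' since it is the consecutivity of $i_s$ and $i_{s+1}$ in the chosen subsequence that makes $p_s$ and $p_{s+1}$ the relevant adjacent positions of $p$. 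Once that is in hand, conditions (i) and (ii) together show that $x$ and $x'$ have identical multisets of children, so they may be assigned the same label in the generating tree.
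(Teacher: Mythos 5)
Your proposal is correct and follows essentially the same route as the paper, which states the pattern-containment observation (at most one chosen index of an occurrence can fall in the repeated block, since $p$ has no consecutive repeats) and asserts without further comment that the same letters may be appended. Your explicit verification that $\mathrm{asc}(x)=\mathrm{asc}(x')$, so that the admissible range $b\leq\mathrm{asc}(x)+1$ coincides for both parents, fills in a detail the paper leaves implicit but introduces nothing new in approach.
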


\begin{prop}\label{perm1}
$\mathrm{a}_{021,102}(n) = 3\cdot2^{n-1}-\binom{n+1}{2}-1$.  Equivalently, $\mathcal{A}_{021,102}(n)$ is in bijection with the set of permutations of length $n$ avoiding 123 and 3241.
\end{prop}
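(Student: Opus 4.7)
The plan is to build a finitely labeled generating tree for $\mathcal{A}_{021,102}$ isomorphic to the tree FINLABEL produces on $S_n(123,3241)$, which simultaneously delivers the bijection and the enumeration.

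I start with a structural classification of the avoiders. A weakly increasing ascent sequence is an RGF $0^{a_0}1^{a_1}\cdots k^{a_k}$ and automatically avoids both patterns (each requires a strict descent). If $x \in \mathcal{A}_{021,102}(n)$ has a descent, then comparing the first descended value $d$ against the ascent pair formed by an initial $0$ and the first occurrence of the prefix maximum $k$, the $021$ restriction forces $d = 0$. After this descent the tail is tightly constrained: if the pre-descent maximum is $k=1$ the tail may be any word over $\{0,1\}$, while if $k\geq 2$ the tail must consist entirely of zeros, since any positive tail letter $v\geq 2$ combines with a preceding $1$ and the descent-$0$ to form a $102$, and $v=1$ combines with a preceding $0$ and a $2$ to form a $021$.

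These three structural types split into six subclasses based on the ``active'' part of the sequence. By Lemma \ref{gtlem}, consecutive repeats are immaterial, so I assign each sequence one of six labels: $A_0$ (all zeros), $A_1$ (weakly increasing with max $1$), $A^\ast$ (weakly increasing with max $\geq 2$), $B_0$ and $B_1$ (post-descent with $k=1$, last letter $0$ or $1$), and $C$ (post-descent with $k\geq 2$). Case analysis of which letters may legally be appended produces the succession rules
\begin{align*}
A_0 &\to A_0,\ A_1; & A_1 &\to A_1,\ A^\ast,\ B_0; & A^\ast &\to A^\ast,\ A^\ast,\ C; \\
B_0 &\to B_0,\ B_1; & B_1 &\to B_0,\ B_1; & C &\to C,
\end{align*}
rooted at $A_0$ at level $1$. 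Solving the resulting linear recurrences in closed form gives label counts $1$, $n-1$, $2^{n-1}-n$, $2^{n-1}-1-\binom{n}{2}$, $2^{n-2}-1$, and $2^{n-2}-n+1$ (valid for $n \geq 2$), which sum to $3\cdot 2^{n-1}-\binom{n+1}{2}-1$ after applying $\binom{n}{2}+n=\binom{n+1}{2}$.

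Finally, I would run FINLABEL on $S_n(123,3241)$ and verify that its succession rules agree with the six-state system above, giving a label-preserving isomorphism of trees and hence the stated bijection. The principal obstacle will be on the permutation side: identifying, for each of the six labels, a precise combinatorial invariant on $(123,3241)$-avoiders that is preserved under the tree-generating operation (insertion of the new maximum at an active site). I would approach this by computing $S_n(123,3241)$ and its label distributions for small $n$, matching them against the ascent-sequence counts, and then extracting a uniform structural description to prove the rules agree in general.
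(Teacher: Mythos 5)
Your proposal is correct and takes essentially the same route as the paper: a finitely labeled generating tree for $\mathcal{A}_{021,102}$ (the paper uses five labels $(0),(01),(010),(012),(0120)$ and the transfer matrix method, you use six structurally-defined labels and solve the recurrences directly, but the trees and the resulting count $3\cdot 2^{n-1}-\binom{n+1}{2}-1$ agree). One caveat on your final step: the FINLABEL tree for $S_n(123,3241)$ has eight labels, so the label-preserving isomorphism you hope for does not exist literally; the paper likewise only matches the two enumeration sequences and explicitly leaves a direct bijection as an open problem, so you should plan to settle for equality of counting sequences rather than a tree isomorphism.
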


\begin{proof}
First note that avoiding 102 implies all ascent sequences will be RGFs by \cite{DS11}.  Therefore any letter appended to $x$ may be at most $\max(x)+1$.

Members of $\mathcal{A}_{021,102}(n)$ can be generated using a generating tree with precisely 5 labels: (0),(01), (010), (012), and (0120).  We have the following root label and succession rules.

\begin{itemize}
\item Root: (0)
\item Rules:
\begin{itemize}
\item $(0) \leadsto (0)(01)$
\item $(01) \leadsto (01)(010)(012)$
\item $(010) \leadsto (010)(010)$
\item $(012) \leadsto (0120)(012)(012)$
\item $(0120) \leadsto (0120)$
\end{itemize}
\end{itemize}

We now justify each succession rule in turn. 

$(0) \leadsto (0)(01)$: We first consider the root $0$, which we label as $(0)$.  This root has two children, $00$ and $01$.   By Lemma \ref{gtlem}, any all-zero ascent sequence will have an isomorphic set of children and so we label any all-zero ascent sequence with (0) (and only the all-zero sequences get this label).  Label the other child of an all-zero sequence (01).

$(01) \leadsto (01)(010)(012)$:   The first appearance of the (01) label is the ascent sequence 01, which has children 010, 011, and 012.  Again by Lemma \ref{gtlem} we may label 011 with (01) because of the repeated 1.  Note that any ascent sequence with form $0\dotsm0 1\dotsm1$ has label (01) and no others.  Label 010 with (010) and 012 with (012), and the rule follows.

$(010) \leadsto (010)(010)$:  Without considering pattern avoidance, the ascent sequence 010 has children 0100, 0101, and 0102.    However, 0102 contains the forbidden pattern 102, so this branch is pruned.  More broadly, if an ascent sequence contains a 010 pattern, no digits larger than 1 may be appended in order to avoid 102.  On the other hand, as many copies of 0 or 1 as we like may appear in the rest of the ascent sequence, so each ascent sequence starting with 010 has precisely two children: one where 0 is appended and one where 1 is appended since neither will produce a 021 nor 102.  Each of these children is given the label (010).  In this way, the label (010) is applied only to ascent sequences containing a 010 and consisting of  only 0s and 1s.

$(012) \leadsto (0120)(012)(012)$:  The first instance of the (012) is the ascent sequence 012, which has children 0120, 0121, 0122, and 0123 when ignoring pattern avoidance.  However, 0121 contains the forbidden pattern 021, so this branch is pruned. Label 0120 with (0120) and 0122 may be labeled with (012) by Lemma \ref{gtlem}.   The sequence 0123 may also be labeled (012) since it has an isomorphic set of children as 012: the only letters which may be added are 0, a copy of the last letter, or one more than the last letter.    It follows that any ascent sequence $x_1 \dotsm x_n$ labeled with (012) will be weakly increasing and its 021-avoiding children will be $x_1\dotsm x_n 0$, $x_1\dotsm x_n x_n$ and $x_1\dotsm x_n (x_{n}+1)$ which get labels (0120), (012) and (012) respectively.

$(0120) \leadsto (0120)$:  The children of 0120 are 01200, 01201, 01202, and 01203 when ignoring pattern avoidance.  All but 01200 contain some forbidden patterns, however.  More broadly, if an ascent sequence in this tree contains a $0120$ pattern then that pattern must be a literal 0120 and so only a 0 may be appended.  Since this child also contains a 0120 we give it the label (0120) as well.  


Finally, we use the transfer matrix method to obtain a closed form for the generating function $f_{021,102}(z) = \sum_{n \geq 0} \mathrm{a}_{021,102}(n) z^n$.  We have 5 labels in our generating tree, and we make a $6 \times 6$ production matrix $P$.  We associate columns 2 through 6 and rows 2 through 6 with the labels (0), (01), (010), (012), and (0120) respectively.  For $2 \leq i, j, \leq 6$, let $P_{i,j}$ be the number of nodes of label $j$ that are children of a node of label $i$.  Also, we let the first row have a 1 in column 2 to account for the root, (0).  The production matrix that results from our generating tree rules is $$P =\left[\begin{array}{cccccc}0&1&0&0&0&0\\0&1&1&0&0&0\\0&0&1&1&1&0\\0&0&0&2&0&0\\0&0&0&0&2&1\\0&0&0&0&0&1 \end{array}\right].$$  By construction the $j$th entry in the first row of $P^n$ is the number of nodes of type $j$ at the $n$th level of the generating tree, so $\mathrm{a}_{012,102}(n)= (P^n)_{1,2}+(P^n)_{1,3}+(P^n)_{1,4}+(P^n)_{1,5}+(P^n)_{1,6}$. It follows that $f_{021,102}(z) = u (I-zP)^{-1} e$, where $u$ is the row vector $u=\left[\begin{array}{cccccc}1&0&0&0&0&0\end{array}\right]$ and $e$ is a column vector consisting of six 1s.  This calculation yields the closed form $f_{021,102}(z) = \frac{z^4-3z^3+6z^2-4z+1}{(z-1)^3(2z-1)}$, from which we deduce the desired enumeration for $n \geq 1$ by the standard methods.

Using the Maple package \texttt{FINLABEL} in \cite{V06} we see that permutations avoiding $\{123,3241\}$ admit an 8-label generating tree, which has the same enumeration.  Although we have showed the cardinality of the ascent sequence set and the permutation set are the same, it remains an open problem to find a direct combinatorial bijection between them since the generating trees have differing numbers of labels.
\end{proof}

In the next two propositions we will employ similar methods and omit some of the details.  Unlike the generating tree of Proposition \ref{perm1}, the generating trees in Propositions \ref{perm2} and \ref{perm3} are isomorphic to the corresponding trees for pattern-avoiding permutations and so the proofs may be translated into bijections between the two sets.



\begin{prop}\label{perm2}
$\mathrm{a}_{102,120}(n) = (n-1)\,2^{n-2}+1$.  Equivalently, $\mathcal{A}_{102,120}(n)$ is in bijection with the set of permutations of length $n$ avoiding 132 and 4312.
\end{prop}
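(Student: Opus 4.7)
The plan mirrors the generating-tree approach of Proposition~\ref{perm1}. Since $102$ is a subpattern of $01012$, Lemma~\ref{RGFlemma} implies every $x\in\mathcal{A}_{102,120}(n)$ is an RGF, so the only ascent-sequence continuations of $x$ are $xb$ with $b\in\{0,1,\dots,M(x)+1\}$, where $M(x)=\max(x)$. I would first show that the set of valid next letters forms the integer interval $V(x)=[L(x),\min(U(x),M(x)+1)]$, where $L(x)$ is the maximum value appearing as the start of an ascent in $x$ (or $0$ if none) and $U(x)$ is the minimum value appearing as the start of a descent (or $\infty$ if none). The lower bound is forced by $120$-avoidance (any smaller $b$ would complete a $120$ with an earlier ascent pair) and the upper by $102$-avoidance (any larger $b$ would complete a $102$ with an earlier descent pair).

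The crux is collapsing what is a priori an infinite label space into three labels by proving two structural facts: (a) $L(x)=M(x)-1$ whenever $M(x)\geq 1$, since the RGF condition forces the value $M(x)$ to be preceded by an ascent from $M(x)-1$; and (b) $U(x)=M(x)$ whenever $x$ contains a descent, since $102$-avoidance forces the first descent start to be the current maximum (as the prefix up to the first descent has no descents, so is weakly increasing, and nothing after it can exceed that value), and (a) combined with $120$-avoidance then pins every subsequent descent start to $M(x)$ as well. Together these partition $\mathcal{A}_{102,120}(n)$ into three equivalence classes: (A$_0$) the all-zero sequence, with $V=\{0,1\}$; (A) nonzero with no descent, with $V=\{M-1,M,M+1\}$; and (B) containing a descent, with $V=\{M-1,M\}$.

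Applying Lemma~\ref{gtlem} to identify repeated-letter extensions and tracking how $M(x)$ and the existence of a descent change under each append yields the succession rules
\[
(\text{A}_0)\leadsto(\text{A}_0)(\text{A}),\qquad (\text{A})\leadsto(\text{A})(\text{A})(\text{B}),\qquad (\text{B})\leadsto(\text{B})(\text{B}),
\]
with root $(\text{A}_0)$. Letting $\alpha_n,\beta_n,\gamma_n$ count nodes of each label at depth $n$, the resulting linear recurrences $\alpha_n=1$, $\beta_{n+1}=\alpha_n+2\beta_n$, $\gamma_{n+1}=\beta_n+2\gamma_n$ with $\beta_1=\gamma_1=0$ solve to $\beta_n=2^{n-1}-1$ and $\gamma_n=(n-3)2^{n-2}+1$, whose total is $(n-1)2^{n-2}+1$. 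To obtain the bijection with $\mathfrak{S}_n(132,4312)$ I would then invoke the FINLABEL algorithm of \cite{V06} to produce the analogous three-label generating tree for those permutations and verify the succession rules match, so that the resulting tree isomorphism furnishes the bijection directly. The main obstacle is the proof of fact (b): combining $102$- and $120$-avoidance with (a) to rule out any descent starting strictly below $M(x)$ requires threading together a bound from below ($x_{j_0+1}\geq L(x)$ by $120$-avoidance against a prior ascent from $M(x)-1$) and a bound from above ($x_{j_0+1}<x_{j_0}\leq M(x)-1$ if $x_{j_0}<M(x)$) to reach a contradiction, but once it is established the remaining arguments are routine.
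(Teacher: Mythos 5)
Your proposal is correct and takes essentially the same approach as the paper: the identical three-label generating tree (all-zero sequences, nonzero weakly increasing sequences, and sequences containing a descent, matching the paper's labels $(0)$, $(01)$, $(010)$) with the same root and succession rules, differing only in that you solve the resulting linear recurrences directly where the paper invokes the transfer matrix method. Your explicit interval characterization $V(x)=[M(x)-1,\min(U(x),M(x)+1)]$ of the allowed appended letters is just a more systematic packaging of the paper's case-by-case justification of the rules.
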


By Lemma \ref{RGFlemma}, since 102 is a subpattern of 01012, $\mathcal{A}_{102,120}(n)$ is in bijection with set partitions avoiding the patterns 1213 and 1231.   In Example 4.16 of \cite{JMS13}, Jelinek, Mansour, and Shattuck provide an algebraic proof of Proposition \ref{perm2} in this context of set partitions.  While the result is known, this is the first proof that can be considered combinatorial.

\begin{proof}
As noted in the preceding paragraph, avoiding 102 implies all ascent sequences are RGFs by Lemma \ref{RGFlemma}.  Therefore any letter appended to $x$ may be at most $\max(x)+1$.

Members of $\mathcal{A}_{102,120}(n)$ can be generated using a generating tree with precisely 3 labels: (0),(01), and (010).   We have the following root label and succession rules.

\begin{itemize}
\item Root: (0)
\item Rules:
\begin{itemize}
\item $(0) \leadsto (0)(01)$
\item $(01) \leadsto (01)(01)(010)$
\item $(010) \leadsto (010)(010)$
\end{itemize}
\end{itemize}

In this case we can state simply how labels are applied to ascent sequences.   Any all-zero sequence gets the label (0) and any other weakly increasing sequence will have the label (01).  All others, which each contain a 010 pattern, get label (010).  We now justify each succession rule in turn. 

$(0) \leadsto (0)(01)$: This is by the same argument as in the proof of Proposition \ref{perm1}. 

$(01) \leadsto (01)(01)(010)$:  The first appearance of the label (01) is the ascent sequence 01.  Without considering pattern-avoidance, 01 has children 010, 011, and 012.  By Lemma \ref{gtlem} we may label 011 as (01) as well, and we choose to label 010 with (010).   Further, because of the ascent 12 in 012, we may have no more copies of 0 in the rest of the ascent sequence (lest we have a copy of $120$), and so the minimum for the next appended letter(s) increases to 1.  Therefore $012$ may be labeled the same as 011, namely (01) without loss of significant information.  Following this same logic, any weakly increasing sequence gets labeled with a (01) and is the child of a node with label either (0) or (01). 

$(010) \leadsto (010)(010)$:  Ignoring pattern avoidance, the children of 010 are 0100, 0101, and 0102.  However, $0102$ contains the forbidden pattern 102 so that branch is pruned.  Lemma \ref{gtlem} implies 0100 can have the same label as 010.    For 0101, the second 1 is also irrelevant to formation of new copies of 102 and 120 since if that second 1 were part of a forbidden pattern then the first 1 would also be part of a forbidden pattern.  Therefore 0101 has the same options for children as 010.  More broadly, if $x$ has label (010), then $x$ contains a 010 pattern at $x_i x_j x_k$ and so the only children which avoid 102 and 120 are additional copies of the values $x_i$ or $x_j$ and so should also be labeled (010) by the same reasoning.

Via the transfer matrix method, these rules give the ordinary generating function $\frac{x^3-5x^2+4z-1}{(x-1)(2x-1)^2}$, from which we deduce the desired enumeration for $n \geq 1$.

Using the Maple package \texttt{FINLABEL} \cite{V06} we see that permutations avoiding $\{132, 4312\}$, or equivalently, their inverses $\{132,3421\}$ admit an isomorphic generating tree, and thus have the same enumeration.  Details of the implied bijection are left to the reader.
\end{proof}

\begin{prop}\label{perm3}
$\mathcal{A}_{101,120}(n)$ is in bijection with the set of permutations of length $n$ avoiding 231 and 4123. 
\end{prop}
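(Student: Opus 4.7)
The plan is to mimic the generating-tree strategy of Propositions \ref{perm1} and \ref{perm2}. Since $101$ is a subpattern of $01012$, Lemma \ref{RGFlemma} tells us that every member of $\mathcal{A}_{101,120}(n)$ is an RGF, so any next letter $b$ must satisfy $b \le \max(x)+1$. Avoidance of $101$ further forbids appending any value $b$ which already appears in $x$ at some position preceding a strictly smaller entry (call such a value \emph{locked}); avoidance of $120$ forbids any $b < Q(x) := \max\{x_i : \exists\, j > i,\ x_j > x_i\}$. Hence the admissible next letters for $x$ form the interval $\{Q(x), Q(x)+1,\ldots,\max(x)+1\}$ with the locked values removed.

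I will partition $\mathcal{A}_{101,120}$ into six classes and label each generating-tree node accordingly: $(0)$ for all-zero sequences; $(01)$ for weakly increasing sequences with $\max = 1$; $(012)$ for weakly increasing sequences with $\max \ge 2$; $(010)$ for sequences with $x_n = \max(x)-1$ (the ``just-descended'' state); $(0102)$ for sequences with $x_n = \max(x)$ and $\max(x)-1$ locked (one ascent since the most recent descent); and $(01023)$ for sequences with $x_n = \max(x)$ and $\max(x)-1$ not locked (at least two ascents since the most recent descent). By Lemma \ref{gtlem}, consecutive repetitions of the last letter do not affect the classification, so every $x$ lies in exactly one class. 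Direct computation of $Q(x)$ and of the locked values for a representative of each class yields the succession rules
$(0) \leadsto (0)(01)$,
$(01) \leadsto (010)(01)(012)$,
$(012) \leadsto (010)(012)(012)$,
$(010) \leadsto (010)(0102)$,
$(0102) \leadsto (0102)(01023)$, and
$(01023) \leadsto (010)(01023)(01023)$.

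The least obvious rule is the last: in state $(01023)$ we have $Q(x) = \max(x) - 1$, and the locked values from the original descent of the current cycle have moved strictly below $Q(x)$ after two preceding ascents, so each of the three admissible values $\max(x)-1$, $\max(x)$, $\max(x)+1$ is allowed. Appending $\max(x)-1$ locks $\max(x)$ and puts the child in state $(010)$, while appending $\max(x)$ or $\max(x)+1$ leaves $\max(x)-1$ unlocked and keeps the child in state $(01023)$. With the generating tree for ascent sequences in hand, invoking the Maple package \texttt{FINLABEL} of \cite{V06} produces an isomorphic six-label generating tree for permutations of length $n$ avoiding $\{231,4123\}$, yielding the required bijection. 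The main obstacle is verifying that the six labels truly separate the admissible states---in particular, that older locked values always lie strictly below the current $Q(x)$ and hence do not interfere with subsequent children.
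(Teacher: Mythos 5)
Your argument is correct and follows essentially the same route as the paper's: a finitely labeled generating tree for $\mathcal{A}_{101,120}$ matched against the \texttt{FINLABEL} tree for $\{231,4123\}$-avoiding permutations, and your succession rules check out --- in particular the key facts that $Q(x)=\max(x)-1$ for any RGF with a positive letter, and that neither $\max(x)$ nor $\max(x)-1$ can be locked when it equals the last letter (else a $101$ would already be present). The only differences are cosmetic: your labels $(01)$, $(012)$, $(01023)$ have identical productions and collapse to the paper's single label $(01)$, recovering its four-label tree, and you should add ``has a descent'' to the definitions of $(010)$, $(0102)$, $(01023)$ so that they are literally disjoint from $(01)$ and $(012)$.
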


\begin{proof}
First note that avoiding 101 implies all ascent sequences will be RGFs by \cite{DS11}.  Therefore any letter appended to $x$ may be at most $\max(x)+1$.

Members of $\mathcal{A}_{101,120}(n)$ can be generated using a generating tree with precisely 4 labels: (0),(01),(010), and (0102).  We have the following root label and succession rules.

\begin{itemize}
\item Root: (0)
\item Rules:
\begin{itemize}
\item $(0) \leadsto (0)(01)$
\item $(01) \leadsto (01)(01)(010)$
\item $(010) \leadsto (010)(0102)$
\item $(0102) \leadsto (01)(0102)$
\end{itemize}
\end{itemize}

We now justify each succession rule in turn.

$(0) \leadsto (0)(01)$:  This reasoning is the same as in the previous two propositions.  The ascent sequences with label (0) are exactly the all-zero sequences.

$(01) \leadsto (01)(01)(010)$:   The ascent sequence 01 has children 010, 011, and 012.  The first of these gets label (010).  As we have seen previously, 011 may be labeled as (01) by Lemma \ref{gtlem}.    As in the proof of Proposition \ref{perm2}, avoiding 120 means that 012 may be labeled with (01) since the minimum for appended letters is raised.  More broadly, if $x=x_1\dots x_n$ has label (01), let $m$ be the first letter of the last ascent in $x$ or $0$ if there is no ascent.  Then the only children are $xm$, $x x_n$, and $x (x_{n}+1)$, which get labels (010), (01), and (01) respectively.

$(010) \leadsto (010)(0102)$:   The ascent sequence 010 has children 0100, 0101, and 0102, but 0101 has the forbidden pattern 101 and is therefore pruned.  Lemma \ref{gtlem} implies 0100 can be labeled as its parent (010), and 0102 gets the new label (0102).  More broadly any ascent sequence $x$ with label (010) allows for two children: repeating the last letter or appending a letter larger than all letters in $x$.  These children get labels (010) and (0102) respectively.

$(0102) \leadsto (01)(0102)$: The ascent sequence 0102 has children 01020, 01021, 01022, and 01023, but the first two of these contain forbidden patterns.  Lemma \ref{gtlem} implies 01022 may be labeled as its parent. For 01023, the first three letters become irrelevant because no more copies of 0 or 1 may appear lest we create a 120 pattern.  Therefore the minimum for new letters  is raised and only the subsequence 23 matters for the sake of pattern avoidance.  Thus we label 01023 according to this pattern, (01).  More broadly any ascent sequence $x$ with label (0102) allows for two children: repeating the last letter or appending $\max(x)+1$.  These children get labels (0102) and (01) respectively.

Via the transfer matrix method, these rules give the ordinary generating function $\frac{(x-1)^3}{3x^3-5x^2+4x-1}$.  Using the Maple package \texttt{FINLABEL} in \cite{V06} we see that permutations avoiding $\{231,4123\}$ admit an isomorphic generating tree, and thus have the same enumeration.
\end{proof}

We close this section with an additional generating tree argument, although of a very different structure than the preceding propositions.



\begin{prop}\label{bifib}
$\mathrm{a}_{101,110}(n)=F_{2n-1}$
\end{prop}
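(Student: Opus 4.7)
The plan is to construct a generating tree for $\mathcal{A}_{101,110}(n)$, inside the tree of all ascent sequences shown in Figure~\ref{fig:treeall}, whose labels encode the number of children; I will show that the tree is rooted at label $2$ and obeys the compact succession rule $(k) \leadsto (2)^{k-1}(k+1)$ for each integer $k \geq 2$, from which the odd-indexed Fibonacci recurrence drops out.

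First I would characterize exactly when appending a value $v$ to $x \in \mathcal{A}_{101,110}(n)$ produces another member of the family. Let $M = \max(x)$ and let $R$ denote the largest value occurring at least twice in $x$ (undefined if $x$ has distinct entries). Appending $v$ keeps $x$ an ascent sequence iff $v \leq M+1$; creates no new 110 iff $v \geq R$ (otherwise two copies of $R$ together with the new $v$ form a 110); and creates no new 101 iff no value smaller than $v$ appears in $x$ after the last occurrence of $v$. For the 101 condition I would use that $x$ itself avoids 101, so between any two occurrences of $v$ in $x$ every value is already $\geq v$, reducing the check to what sits strictly after the last $v$.

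Next I would pin down the label. If $x$ has all distinct entries then $x = 012\cdots M$ and every $v \in \{0,1,\dots,M+1\}$ is appendable, giving label $M+2$. If $R$ is defined, combining the two avoidance properties forces every entry of $x$ appearing after the last copy of $R$ to strictly exceed $R$ and to occur only once; calling this suffix $s_1 s_2 \cdots s_t$, the appendable values are exactly $M+1$ (always), $R$ itself (always, since $R$ being repeated precludes any $<R$ after the first $R$), and those $s_j$ that are right-to-left minima of the suffix. Hence the label equals $2 + \#\{\text{RL minima of } s_1 \cdots s_t\}$, which is $2$ precisely when the suffix is empty. The succession rule now follows: appending $M+1$ to an $x$ of label $k$ extends the suffix by a new global maximum, which becomes a new RL minimum while leaving the existing ones intact, so the child has label $k+1$; appending any other appendable $v$ makes $v$ the new largest repeated value in the child, emptying the child's suffix and giving label $2$.

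Finally, writing $c_k(n)$ for the number of label-$k$ nodes at level $n$ and $T(n) = \mathrm{a}_{101,110}(n)$, the rule gives $c_k(n) = c_2(n-k+2)$ for $k \geq 2$, hence $T(n) = \sum_{j=1}^{n} c_2(j)$. A short generating function calculation then produces
\[
\sum_{n \geq 1} T(n)\, x^n \;=\; \frac{x(1-x)}{1-3x+x^2},
\]
which coincides with $\sum_{n \geq 1} F_{2n-1}\, x^n$; equivalently $T(n)$ satisfies $T(n) = 3T(n-1) - T(n-2)$ with $T(1)=1,\ T(2)=2$, the recurrence for the odd-indexed Fibonacci numbers. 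The main obstacle I anticipate is step~2---cleanly combining the 101 and 110 constraints to see that the label depends only on the right-to-left-minima structure of the tail beyond the last $R$; once that is in hand, verifying the succession rule and extracting the generating function are both routine.
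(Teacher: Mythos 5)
Your proposal is correct and takes essentially the same route as the paper: a generating tree whose labels record the number of children, rooted at $(2)$ with the succession rule $(k)\leadsto (2)^{k-1}(k+1)$, from which the odd-indexed Fibonacci numbers follow. The only differences are organizational — you justify the rule via the largest repeated value and the right-to-left minima of the tail (the paper instead splits on the appended letter versus the last letter $x_n$), and you extract $F_{2n-1}$ by a generating-function/recurrence computation rather than the explicit triangle $b(n,k)=F_{2n-2k+2}$ — and both of these steps check out.
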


This result first appeared as the $x=q=1$ case of Proposition 3.6 in \cite{MS13}.  There, Mansour and Shattuck track ascent sequences according to two statistics based on indices of left-to-right maxima and the value of the largest letter.  Our methods are similar, but are included to point out an interesting combinatorial connection to a refinement of the bisection of the Fibonacci numbers in the OEIS \cite{OEIS}.

\begin{proof}
We will construct a generating tree more typical of those in the literature: each ascent sequences is labeled by its number of children.  Thus a node with label $(k)$ has $k$ children.  In the end we will need to use infinitely many labels, but still be able to arrive at a simple formula.

First note that avoiding 101 implies all ascent sequences will be RGFs by Lemma \ref{RGFlemma}.  Therefore any letter appended to $x$ may be at most $\max(x)+1$.  

We see that the ascent sequence 0 has two children that avoid our given patterns, namely 00 and 01.  00 has 2 children that avoid our given patterns, 000 and 001.  01 has 3 children that avoid our given patterns, 010, 011, and 012.

In general, suppose that $x=x_1\dotsm x_n$ is in $\mathcal{A}_{101,110}(n)$ and has $k$ children that avoid our given patterns.  There are 3 cases to consider for a child $xz$ by appending digit $z$ to the end of $x$: (a) $z>x_n$, (b) $z=x_n$, or (c) $z<x_n$.  

If $z>x_n$, there is precisely one choice for $z$, $z = max(x)+1$, since $x$ is an RGF.  Further, this child $xz$ has one more child than $x$ does, since any digit that could have been appended to $x$ can still be appended to $xz$ without creating a forbidden pattern, and we could also append $z+1$.  Therefore if $x$ has label $(k)$, then this particular child has label $(k+1)$.

If $z=x_n$, then $z$ is determined uniquely.  As for the children of $xz$, we cannot use any digit less than $z$ later in the construction of the ascent sequence (lest we form a 110 pattern), so $xz$ has precisely two children, $xzz$ and $xz(\max(x)+1)$, and should receive the label $(2)$.

If $z<x_n$, 
then appending a letter to $xz$ less than $z$ creates a 110 pattern, and appending a letter with value between $z$ and $\max(x)+1$ creates a 101 pattern since $x$ is an RGF.   Therefore $xz$ has precisely two children, $xzz$ and $xz(\max(x)+1)$, and so has the label $(2)$.  These nodes labeled $(2)$ make up the remaining $k-2$ children of a node labeled $(k)$.

The above arguments yield the following root and succession rules:
\begin{itemize}
\item Root: (2)
\item Rules:
\begin{itemize}
\item $(2) \leadsto (2)(3)$
\item $(3) \leadsto (2)(2)(4)$
\item $(4) \leadsto (2)(2)(2)(5)$
\item $(k) \leadsto (2)^{k-1}(k+1)$
\end{itemize}
\end{itemize}

  We intend to show that the number of nodes on level $n$  (i.e., $\mathrm{a}_{101,110}(n)$) is given by $F_{2n-1}$, where $F_1=1$ and $F_2=1$.  

We will show a stronger result.  Let $b(n,k)$ be the number of nodes on level $n$ with label $k$.  Then clearly the number of nodes on level $n$ is given by $\sum_{k\geq 2} b(n,k)$.  We will show that:
\begin{equation}\label{eqn:fibtri}
 b(n,k) = \begin{cases}
    F_{2n-2k+2} & 2\leq k \leq n \\
   1 & k=n+1 \\
   0 & k<2 \textrm{ or } k>n+1 \\
\end{cases}
\end{equation}
Values of $b(n,k)$ for small $n$ and $k$ appear in Table \ref{tab:fibtri}.

\begin{table}
\begin{center}
\begin{tabular}{c|ccccc || c}
$n$~\textbackslash ~$k$ & 2 & 3& 4& 5& 6 & Row sum\\
\hline
1 & 1 & 0 & 0 & 0 & 0 & 1\\
2 & 1 & 1 & 0 & 0 & 0 & 2\\
3 & 3 & 1 & 1 & 0 & 0 & 5\\
4 & 8 & 3 & 1 & 1 & 0 & 13\\
5 & 21 & 8 & 3 & 1 & 1 & 34\\
\end{tabular}
\end{center}
\caption{Values of $b(n,k)$, the number of nodes of the generating tree for $\mathcal{A}_{101, 110}$ on level $n$ with label $(k)$.}
\label{tab:fibtri}
\end{table}

Our result that $\mathrm{a}_{101,110}(n)=F_{2n-1}$ follows from equation \eqref{eqn:fibtri} and the easily-proven  identity
\begin{equation}\label{eqn:fibid}
  F_{2n-1} = F_{2n-2} + F_{2n-4} + F_{2n-6} + \dotsm F_{2} + 1.
\end{equation}

Computation verifies the values for $b(n,k)$ for small $n$ and $k$.  Level 1 has only the node (2), corresponding to the ascent sequence $0$, and so $b(1,2)=1$ and $b(1,k)=0$ for any $k\neq 2$.   Level 2 corresponds to the ascent sequences 00 and 01, whose corresponding nodes are labeled (2) and (3) respectively and thus $b(2,2)=F_2=1$ and $b(2,3)=1$.


Observe that the succession rules imply that there is a bijection between nodes on level $n-1$ with label $k-1$ and nodes on level $n$ with label $k$ for $k \geq 3$.  Therefore $b(n,k) = b(n-1,k-1)$ for $k\geq 3$.  Iterating this recurrence relation implies $b(n,k) = b(n-k+2,2)$.  Therefore it remains to show $b(n,2) = F_{2n-2}$.  

We will proceed by induction: assume that $b(n,2)=F_{2n-2}$ and consider $b(n+1,2)$.  Since each node is labeled with its number of children, the sum of the labels on level $n$ equals the number of nodes on level $n+1$.  The succession rules imply each node with label $(k)$ produces $k-1$ nodes labeled (2), and so 
\begin{equation}
 \begin{split}
  b(n+1,2)&=\sum_{k=2}^{n+1} (k-1)\,b(n,k) \\
                &= \sum_{k=2}^{n+1} (k-1)\,b(n-k+2,2) \\
 \end{split}
\end{equation}
Apply the induction hypothesis to see
 \begin{equation}
 \begin{split}
  b(n+1,2) &= \sum_{k=2}^{n} (k-1)\,F_{2n-2k+2} + n\cdot 1 \\
                 &= 1\cdot F_{2n-2} + 2\cdot F_{2n-4} + 3\cdot F_{2n-6} + \dotsm (n-1) \cdot F_{2} + n \\
                 &= F_{2n}
 \end{split}
\end{equation}
where the last equality can be proven via induction and the identity $F_{2n+1} = 1+ \sum_{i=1}^{n} F_{2i}$.   
Alternately, let $G(x) = \sum_{n\geq 1} F_{2n}x^n$.  Then the obvious order 2 recurrence for $F_{2n}$ shows  $G(x)= x/(1-3x+x^2)$.  This generating function also satisfies $G(x) = x/(1-x)^2 \cdot (1+G(x))$ which implies the identity involving the convolution of the natural numbers with $F_{2n}$.
 Hence we have completed the proof of \eqref{eqn:fibtri}. 
\end{proof}

It should be noted that the triangle of values $b(n,k)$ appears in the OEIS \cite{OEIS} as A121461, where it is linked to nondecreasing Dyck paths and directed column-convex polyominoes based on work by Barcucci et al \cite{B97, B93}.

Those interested in other pairs of patterns $(\sigma, \tau)$ for which  $a_{\sigma,\tau}(n) = F_{2n-1}$ are directed to Proposition 3.3 of \cite{MS15}.


Generating trees also provide the crux of a proof for the following proposition.

\begin{prop}\label{binomcat}
$\mathrm{a}_{201,210}(n) = \sum_{k=0}^{n-1}\binom{n-1}{k}C_k$
\end{prop}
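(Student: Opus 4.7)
The plan is to prove the count via a two-step reduction terminating in a generating tree argument. First I would establish a characterization that is useful throughout: $x \in \mathcal{A}_{201,210}(n)$ if and only if, for every position $i$, the set $V_i(x) := \{x_j : j > i,\ x_j < x_i\}$ contains at most one distinct element. This follows directly from the pattern definitions, since two distinct values appearing in some $V_i(x)$ produce a $201$ or a $210$ occurrence depending on their relative order.

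Second, I would decompose by run-lengths. Every $y \in \mathcal{A}_{201,210}(n)$ factors uniquely as $(\hat y;\ n_1, \dots, n_k)$, where $\hat y$ is obtained from $y$ by collapsing maximal runs of equal consecutive letters and $n_i \geq 1$ records the length of the $i$-th run. Since the three letters of a $201$ or $210$ pattern are distinct, inflating run lengths neither creates nor destroys such an occurrence, and the ascent-sequence property is also preserved, so this really is a bijection. Letting $b_k$ denote the number of $x \in \mathcal{A}_{201,210}(k)$ with no consecutive duplicates, we obtain
\[
\mathrm{a}_{201,210}(n) \;=\; \sum_{k=1}^{n} \binom{n-1}{k-1}\, b_k,
\]
so the claimed formula is equivalent to $b_k = C_{k-1}$.

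Third, I would build a generating tree on the no-duplicates sequences and show it produces the Catalan numbers. The children of $x = x_1\cdots x_k$ are the legal one-letter extensions $x_1\cdots x_k v$ with $v\neq x_k$, $v\leq\mathrm{asc}(x)+1$, and such that $x_1\cdots x_k v$ still avoids $201$ and $210$. From the characterization, the ``drop structure'' at each node has a rigid tower form: sorted by their associated maxima, the open drop values are weakly increasing, and any strict increase jumps above the previous maximum. Labeling each node by enough of this structural data (at least the current maximum, the ascent count, and the drop tower), one derives succession rules that can be matched against a generating tree for $C_{k-1}$, either directly or by verifying the Catalan recurrence $b_{k+1} = \sum_{j=1}^{k} b_j\, b_{k+1-j}$ through a structural decomposition of $x$.

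The main obstacle is this last step. The naive label (number of children) is insufficient to pin down the succession rule, since two no-duplicates sequences with the same child count can produce children of genuinely different labels depending on the active drop structure. A multi-coordinate label together with case analysis on whether the appended letter $v$ matches an open drop value, equals the current maximum, or creates a new maximum will be required; the case analysis is kept tractable by the rigid tower structure from the first step.
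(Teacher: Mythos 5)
Your first two steps are correct and give a clean reduction. The characterization via the sets $V_i(x)$ is exactly right: two distinct values smaller than $x_i$ appearing after position $i$ yield a $201$ or a $210$ according to their order, and conversely. The run-length collapse is also a genuine bijection here, because an occurrence of $201$ or $210$ uses three distinct values and hence at most one letter per run, and because collapsing or inflating runs changes neither the prefix ascent counts nor the set of values used, so the ascent-sequence condition survives in both directions. With $\binom{n-1}{k-1}$ compositions of $n$ into $k$ parts, this correctly reduces the proposition to showing that the number $b_k$ of primitive (no consecutive repeats) members of $\mathcal{A}_{201,210}(k)$ equals $C_{k-1}$, and the small cases $1,1,2,5$ check out.

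The gap is that the third step --- the entire content of the theorem after the reduction --- is not carried out. You do not specify the label set of your generating tree, do not derive any succession rules, and do not exhibit either an isomorphism with a Catalan generating tree or a verified Catalan recurrence for $b_k$; you explicitly flag this as ``the main obstacle'' and leave it unresolved. The difficulty is real: the ``drop tower'' data you would need to track is not obviously finite or uniform, and a single numeric label (child count) provably does not determine the children's labels here, so the succession rules require a careful multi-parameter analysis. This is consistent with the paper itself, which does not prove the proposition in-line but defers it to a separate paper precisely because the argument is significantly more involved than the other generating-tree proofs. As it stands, your proposal is a correct and worthwhile reduction plus a plan, not a proof; to complete it you must either nail down the succession rules for the primitive sequences and match them to a Catalan tree, or find a direct bijection from the primitive length-$k$ sequences to a Catalan family of size $C_{k-1}$ (for instance to $\mathcal{A}_{101}(k-1)$ or to Dyck paths of semilength $k-1$).
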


We defer the proof itself for a separate paper \cite{P15}, however, as it is signficantly more complicated than the arguments above.

\section{Conclusions and Future Directions}

In this paper we demonstrated multiple instances of pattern-avoiding ascent sequences that yield classical combinatorial avoidance sequences.  The ascent sequence interpretation of each sequence is new to the literature.  At this point, it should be clear that pattern-avoiding ascent sequences are in bijection with a number of other combinatorial structures (including Dyck paths, permutations, and set partitions), and that their avoidance sequences include a number of well-known classical sequences.  We also illustrated several instances of superfluous patterns, although make no claim that we have described all such instances.  To thie end we ask two questions:

\begin{enumerate}
  \item[(1)] What conditions for patterns $p$ and $q$ imply that $|\mathcal{A}_{p,q}(n)| = |\mathcal{A}_{p}(n)|$?
  \item[(2)] What conditions for patterns $p$, $q$, and $r$ imply that $|\mathcal{A}_{p,q}(n)| = |\mathcal{A}_{r}(n)|$?
\end{enumerate}

We have shown that generating trees are useful tools for studying several of these pattern-avoiding sets (see Propositions \ref{perm1}, \ref{perm2}, \ref{perm3}, \ref{bifib}).  In the case of pattern-avoidance in permutations, several other tools have been developed to automate enumeration of avoidance classes.  In a forthcoming paper we adapt one of these other tools, enumeration schemes, to better understand additional sets of pattern-avoiding ascent sequences.

\end{document}